\newcommand{\dd}{\,{\rm d}}
\newcommand\A{{\mathbb{A}}}
\newcommand\R{{\mathbb{R}}}
\newcommand\N{{\mathbb{N}}}
\newcommand\Z{{\mathbb{Z}}}
\newtheorem{theorem}{Theorem}[section]
\newtheorem{proposition}[theorem]{Proposition}
\newtheorem{lemma}[theorem]{Lemma}
\theoremstyle{definition}
\newtheorem{definition}[theorem]{Definition}
\newtheorem{example}[theorem]{Example}
\theoremstyle{remark}
\newtheorem{remark}[theorem]{Remark}
\numberwithin{equation}{section}
\numberwithin{figure}{section}
\begin{document}

\title[Characterization of sharp algebraic decay]
{Characterization of solutions to dissipative systems with sharp algebraic decay}

\author{Lorenzo Brandolese}

\address{L. Brandolese: Universit\'e de Lyon, Universit\'e Lyon 1,
CNRS UMR 5208 Institut Camille Jordan,
43 bd. du 11 novembre,
Villeurbanne Cedex F-69622, France.}
\email{brandolese{@}math.univ-lyon1.fr}
\urladdr{http://math.univ-lyon1.fr/$\sim$brandolese}

\thanks{Supported by the ANR project DYFICOLTI ANR-13-BS01-0003-01}

\subjclass[2000]{42B25, 35K05, 35Q30}


\keywords{Heat equation, Navier--Stokes, Decay, Energy, Besov, Diffusion}

\date{\today}

\begin{abstract}
We characterize the set of functions $u_0\in L^2(\R^n)$ such that the solution
of the problem $u_t=\mathcal{L}u$ in $\R^n\times(0,\infty)$ starting from $u_0$
satisfy upper and lower bounds of the form $c(1+t)^{-\gamma}\le \|u(t)\|_2\le c'(1+t)^{-\gamma}$.
Here $\mathcal{L}$ is in a large class of linear pseudo-differential operator 
with homogeneous symbol (including the Laplacian, the fractional Laplacian, etc.).
Applications to nonlinear PDEs will be discussed: in particular our characterization provides necessary and sufficient conditions
on $u_0$ for a solution of the Navier--Stokes system to satisfy sharp upper-lower decay estimates as above.  

In doing so, we will revisit and improve the theory of \emph{decay characters} by C.~Bjorland, C.~Niche, and M.E.~Schonbek,
by getting advantage of the insight provided by the Littlewood--Paley analysis and the use of Besov spaces.
\end{abstract}

\maketitle

\section{Introduction}

The theory of \emph{decay characters} was first introduced by C.~Bjorland and M.E.~Schonbek in~\cite{BjoS09}
and further developped by C.~Niche and M.E.~Schonbek in~\cite{NicS15},
with the motivation of obtaining sharp upper and lower bound estimates for the $L^2$-norm of solutions to a large class of linear 
or semilinear parabolic systems: so far, this theory has been successfully applied, {\it e.g.\/}, 
to the heat equation, the Navier--Stokes equations \cite{BjoS09}, the quasi-geostrophic equations~\cite{NicS15},
several compressible approximations of Navier--Stokes~\cites{NicS15,NicSxx} and to the Navier--Stokes--Voigt equation~\cite{Nicxx}.

The first issue of the present paper is that a slight modification of the original definition of \emph{decay character}
 makes the theory more powerful and more widely applicable: our modification allows in particular to 
 get \emph{necessary and sufficient} conditions on~$u_0$ for the validity of $L^2$-estimates of the form
\begin{equation}
\label{est:ulo}
 c(1+t)^{-\gamma}\le \|e^{t\mathcal{L}}u_0\|_{L^2(\R^n)}\le c'(1+t)^{-\gamma},
\end{equation}
(where $\mathcal{L}$ is a suitable linear pseudo-differential operator, such as the Laplacian, a fractional Laplacian, etc.),
whereas the original approach \cites{BjoS09,NicS15} only gave \emph{sufficient} conditions on~$u_0$
for the validity of~\eqref{est:ulo}.
We also characterize the class of initial data~$u_0$ such that estimates~\eqref{est:ulo} hold in terms of suitable subsets of Besov spaces.

Applications of our analysis to nonlinear problems include the complete characterization of  divergence-free vector fields~$v_0\in L^2(\R^3)^3$
such that the corresponding weak solutions of the Navier--Stokes equations
satisfy two-side bounds for the energy of the form
\begin{equation}
\label{est:vlo}
 c(1+t)^{-\gamma}\le \|v(t)\|_{L^2(\R^n)}\le c'(1+t)^{-\gamma}, \qquad 0<\gamma<5/4.
\end{equation}
One of our main results, Theorem~\ref{th:4equiv}, will provide three different equivalent conditions on~$v_0$ for the vality of~\eqref{est:vlo}.
Our characterization does not go through when $\gamma=5/4$, because the lower bound
of the nonlinear problem in this borderline case is no longer driven by the corresponding lower bound
for heat kernel.

The other important results of this paper will be summarized by Eq.~\eqref{cara3} in the last section.


\subsection*{Notation}
We will often use the symbols $\lesssim$ or $\gtrsim$ in chain of inequalities to avoid the proliferation of different 
constants. For example, 
writing $f(t)\lesssim g(t)$ we mean that $f(t)\le Cg(t)$ for some constant $C>0$ independent on~$t$. 
When we have both $f(t)\lesssim g(t)$ and $g(t)\lesssim f(t)$ we will often write $f(t)\simeq g(t)$.

\section{Revisiting the theory of \emph{decay characters}}

\subsection{Improvement of the basic definitions}

The authors in \cite{BjoS09} introduced the notion of~\emph{Decay indicator}. As the original definition
looks somewhat too restrictive, we redefine it in the following way:

\begin{definition}
Let  $u_0 \in L^2(\R^n), \;B_\rho=\{\xi\in \R^n\colon|\xi| \leq \rho\}$.
The \emph{lower and upper decay indicators} of $u_0$ are the two lower and upper limits
\[
P_r(u_0)_- = \liminf_{\rho \to 0^+} \rho ^{-2r-n} \int _{B_\rho} |\widehat{u}_0 (\xi) |^2 \, d \xi
\qquad
\text{and}
\qquad
P_r(u_0)_+ = \limsup_{\rho \to 0^+} \rho ^{-2r-n} \int _{B_\rho} |\widehat{u}_0 (\xi) |^2 \, d \xi.
\]
When $P_r(u_0)_-=P_r(u_0)_+$, then we can define the \emph{decay indicator} of~$u_0$ as 
$P_r(u_0)=P_r(u_0)_-=P_r(u_0)_+$\,.
\end{definition}

This definition is interesting only for $r \in \left(- \frac{n}{2}, \infty \right)$, as the decay indicator
is always zero when $r\le -n/2$.

\begin{remark}
Originally, the decay indicator was defined in \cite{BjoS09} as the limit
\begin{equation}
\label{eq:dcs}
P_r(u_0) = \lim _{\rho \to 0^+} \rho ^{-2r-n} \int _{B_\rho} |\widehat{u}_0 (\xi) |^2 \, d \xi,
\end{equation}
implicitly assuming that the limit does exist. But this is not always the case for $u_0\in L^2(\R^n)$.
When the above limit does not exist, the lower and upper decay indicators 
are convenient substitutes for obtaining some relevant estimates, as we will see below.
Of course, $P_r(u_0)_-$ and $P_r(u_0)_+$ are always well defined in $[0,+\infty]$.
An example of $v_0\in L^2(\R^n)$ such that 
$P_r(v_0)$ is not well defined by the limit~\eqref{eq:dcs} can be  
constructed putting fast oscillations for $|\widehat v_0(\xi)|$ near the origin. See the following Example~\ref{ex:v0}.

On the other hand, if $u_0\in L^2(\R^n)$ is such that $\widehat u_0(\xi)\sim |\xi|^r$ as $|\xi|\to0$, with $r\in(-n/2,\infty)$,
then $P_r(u_0)_-=P_r(u_0)_+=r$ and so $P_r(u_0)$ is well defined in this case and $P_r(u_0)=r$ as well.
\end{remark}

\begin{definition}
The {\em upper and lower decay characters} of~$u_0\in L^2(\R^n)$ are respectively defined by
\begin{align}
 \label{r+}
 r(u_0)_+&=\sup\{r\in\R\colon P_r(u_0)_+<\infty\},  \\
 \label{r-}
 r(u_0)_-&=\inf\{r\in\R\colon P_r(u_0)_->0\}.
\end{align}
\end{definition}

\begin{remark}
 For any $u_0\in L^2(\R^n)$, the upper and lower decay characters are always well defined (adopting the usual convention that
 $\inf\emptyset=+\infty$) and satisfy the inequality
 \begin{equation}
 \label{ordering}
 -n/2\le r(u_0)_+ \le r(u_0)_-\le \infty.
 \end{equation}
Indeed, for all $r\le -n/2$, we have $P_r(u_0)_+=0$, so the first inequality is immediate.
Let us now prove the inequality in the middle of~\eqref{ordering}.
If, by contradiction, $r(u_0)_-<r(u_0)_+$, then we could choose two real numbers $r<r'$
such that $P_r(u_0)_->0$ and $P_{r'}(u_0)_+<\infty$.
But the last inequality implies 
$P_r(u_0)_+=\limsup_{\rho\to0} \rho^{2(r'-r)}
 \rho ^{-2r'-n} \int _{B_\rho} |\widehat{u}_0 (\xi) |^2 \, d \xi=0$.
On the other hand, $0\le P_{r}(u_0)_-\le P_{r}(u_0)_+=0$, that contredics the inequality
$P_r(u_0)_->0$.

%
\end{remark}
 
 The upper decay character $r(u_0)_+$
 will play a role in obtaining decay estimates from above for $e^{t\Delta}u_0$, and 
 the lower decay character $r(u_0)_+$ will be useful for obtaining estimates from below.
 This is the reason of our terminology. 

Depending on $u_0\in L^2(\R^n)$, the $\sup$ and the $\inf$ in the definition of $r(u_0)_+$ and $r(u_0)_-$ can be achieved or not.
This observation motivates next definition. 
Our definition below is an improvement of that of~\cite{BjoS09}.

\begin{definition}
\label{def:dc}
If $u_0\in L^2(\R^n)$ is such that there exists $r^*\in(-n/2,\infty)$ such that
\begin{equation}
\label{myr}
r^*(u_0)=\max\{r\in\R\colon P_r(u_0)_+<\infty\}=\min\{r\in\R\colon P_r(u_0)_->0\}.
\end{equation}
then we call this number $r^*=r^*(u_0)$ 
the \emph{decay character} of $u_0$.
We define also the decay character of $u_0$ in the two limit situations as follows:
\[
\begin{aligned}
&r^*(u_0)=+\infty, \qquad &\text{if $r(u_0)_+=r(u_0)_-=+\infty$},\\
&r^*(u_0)=-n/2, \qquad &\text{if $r(u_0)_+=r(u_0)_-=-n/2$}.
\end{aligned}
\]
\end{definition}

\begin{remark}
\label{rem:exist}
For $u_0\in L^2(\R^n)$ the decay character $r^*(u_0)$ does not always exist. One reason for this is that
it can happen that $r(u_0)_+<r(u_0)_-$. See Example~\ref{ex:w0}.
Another reason is that one could have $r(u_0)_+=r(u_0)_-$, but the supremum or the infimum appearing in
the definition of $r(u_0)_+$ and $r(u_0)_-$ are nor achieved.
See Example~\ref{ex:u0}.

In fact, $r^*(u_0)$ does exist and belongs to $(-n/2,\infty)$
if and only if there exists $r^*\in(-n/2,\infty)$ such that $0<P_{r^*}(u_0)_-\le P_{r^*}(u_0)_+<\infty$.
The ``only if'' part of this claim is clear.
The ``if'' part holds because, if $r^*<r'$ then  $P_{r'}(u_0)_+=\infty$ (otherwise we would get the contradiction
$P_{r^*}(u_0)_+=0$, as already observed right after~\eqref{ordering}) and so $r^*=\max\{r\in\R\colon P_r(u_0)_+<\infty\}$.
In the same way one sees that $r^*=\min\{r\in\R\colon P_r(u_0)_->0\}$.
\end{remark}

\begin{remark}
Let us illustrate the difference between our improved definition of decay character and the original definition
in~\cite{BjoS09}. 
The main motivation of such an improvement is the validity of the second assertion of Theorem~\ref{th:BS} below.
Our discussion concerns here only the case $r^*(u_0)\in (-n/2,\infty)$, but it could be adapted also
to the limit situations $r^*(u_0)=+\infty$~or~ $-n/2$.

Originally, the decay character $r^*(u_0)$ was defined in \cite{BjoS09}
implicitly assuming that, for all~$r\in\R$, $P_r(u_0)_-=P_r(u_0)_+=P_r(u_0)$ and also assuming that, 
for some $r^*\in(-n/2,\infty)$,
$0<P_{r^*}(u_0)<\infty$. 
Under such two conditions, we see that
$r^*=\max\{r\in\R\colon 0< P_r(u_0)<\infty\}=\min\{r\in\R\colon 0< P_r(u_0)<\infty\}$.
For this reason, if $u_0$ admits a decay character in the sense of~\cite{BjoS09} then $u_0$ admits
a decay character in the sense of our definition and these are the same.
The converse is not true, so the original definition in \cite{BjoS09} is indeed more restrictive than ours:
for example, the function~$v_0\in L^2(\R^n)$, constructed in Example~\ref{ex:v0} admits a decay character
\emph{only} in the sense of our definition.
\end{remark}

\subsection{Applications to upper and lower decay estimates}
\label{sec:appli}

We start with an application of the notions of upper and lower decay indicators.
As in~\cite{NicS15}, we consider the class of (matricial) pseudo-differential operators~$\mathcal{L}$ 
with symbol 
\[ 
\mathcal{M}(\xi)= P(\xi)^{-1}D(\xi)P(\xi),\qquad \text{for a.e. $\xi\in\R^n$},
\]
where $D(\xi)$ and $P(\xi)$ are respectively diagonal and orthogonal matrices of order~$m$, with
$D(\xi)_{ij}=-c_i|\xi|^{2\alpha}\delta_{i,j}$, and $c_i\ge c>0$, for all $i=1,\ldots,m$ and $\alpha>0$.
We also assume that $P(\xi)_{ij}$ are homogeneous functions smooth outside $\xi=0$.

We are interested in establishing $L^2$-estimates from above and below for solutions of the linear problem
\begin{equation}
 \label{lin:pro}
 \begin{cases}
u_t=\mathcal{L} u, & t>0, \;x\in\R^n,\\
u|_{t=0}=u_0,
\end{cases}
\end{equation}
where $u=(u_1,\ldots,u_m)$ and $u_0=(u_{0,1},\ldots,u_{0,m})$.
Basic examples include the heat equation $v_t=\Delta v$ (in this case $P(\xi)=I_m$ and $D(\xi)=-|\xi|^2I_m$) or the evolution problem for the fractional Laplacian ($P(\xi)=I_m$ and $D(\xi)=-|\xi|^{2\alpha}I_m$). 
Examples of physical interest with $P\not=I_m$ arise, \emph{e.g.}, in fluid mechanics, see~\cite{NicS15}.

We will typically assume $u_0\in (L^2(\R^n))^m$. However, from now on we will not distinguish in our notations
between scalar and vector-valued function spaces and write abusively $u_0\in L^2(\R^n)$ also in the vector-valued case.

\begin{proposition}
\label{prop:ulb}
 Let $u_0\in L^2(\R^n)$, let $\mathcal L$ be a pseudo-differential operator as above.
 Let $u$ be the solution of the linear problem~\eqref{lin:pro}.
 \begin{itemize}
 \item[-]
  If $P_r(u_0)_->0$ then there is a constant $C_1>0$ such that, for all $t>0$,
 \[
 C_1(1+t)^{-\frac{1}{\alpha}(r+n/2)}\le \|u(t)\|_2^2.
 \phantom{\le C_2(1+t)^{-\frac1\alpha(r+n/2)}}.
 \]
 \item[-]
 If $P_r(u_0)_+<\infty$, then there is a constant $C_2>0$ such that, for all $t>0$,
 \[
 \phantom{C_1(1+t)^{-\frac{1}{\alpha}(r+n/2)}\le}
  \|u(t)\|_2^2\le C_2(1+t)^{-\frac1\alpha(r+n/2)}.
 \]
 \end{itemize}
 \end{proposition}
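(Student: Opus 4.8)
The plan is to work entirely on the Fourier side. By Plancherel's theorem, $\|u(t)\|_2^2 = \int_{\R^n} |\widehat{u}(\xi,t)|^2\dd\xi$, and the solution operator acts diagonally after the orthogonal change of variables induced by $P(\xi)$. Since $P(\xi)$ is orthogonal, $\widehat{u}(\xi,t) = P(\xi)^{-1} e^{tD(\xi)} P(\xi)\,\widehat{u}_0(\xi)$, and orthogonality preserves the Euclidean norm at each fixed $\xi$. Each diagonal entry of $e^{tD(\xi)}$ is $e^{-c_i t|\xi|^{2\alpha}}$ with $c\le c_i$, so pointwise in $\xi$ one has the two-sided bound
\[
 e^{-2c' t|\xi|^{2\alpha}}|\widehat{u}_0(\xi)|^2 \le |\widehat{u}(\xi,t)|^2 \le e^{-2c t|\xi|^{2\alpha}}|\widehat{u}_0(\xi)|^2
\]
for suitable $0<c\le c'$. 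This reduces both assertions to estimating $\int_{\R^n} e^{-\kappa t|\xi|^{2\alpha}}|\widehat{u}_0(\xi)|^2\dd\xi$ from above and below.

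For the \textbf{upper bound}, I would assume $P_r(u_0)_+<\infty$, which by definition of $\limsup$ gives a constant $A$ and a radius $\rho_0>0$ with $\int_{B_\rho}|\widehat{u}_0|^2\dd\xi \le A\rho^{2r+n}$ for all $\rho\le\rho_0$. The strategy is to split the integral into the low-frequency region $\{|\xi|\le R(t)\}$ and the high-frequency region, where $R(t)$ is chosen of order $t^{-1/(2\alpha)}$. On the low-frequency part I would integrate the layer-cake/distribution-function estimate $\int e^{-\kappa t|\xi|^{2\alpha}}\,d\mu$ against the measure $\mu(B_\rho)=\int_{B_\rho}|\widehat u_0|^2$, using the controlled growth $\mu(B_\rho)\lesssim\rho^{2r+n}$; integration by parts (Fubini, writing $e^{-\kappa t|\xi|^{2\alpha}}$ as an integral of its derivative) then produces exactly the scaling $t^{-(r+n/2)/\alpha}$. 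On the high-frequency part the Gaussian-type factor $e^{-\kappa t R(t)^{2\alpha}}$ is bounded by a constant (indeed decays faster than any power) times $\|\widehat{u}_0\|_2^2$, which is finite; combining the two regions and passing from $t^{-(r+n/2)/\alpha}$ to $(1+t)^{-(r+n/2)/\alpha}$ (harmless for bounded $t$, since $\|u(t)\|_2\le\|u_0\|_2$) yields $C_2$.

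For the \textbf{lower bound}, I would assume $P_r(u_0)_->0$, so there are $a>0$ and a sequence $\rho_k\to0^+$ (or all small $\rho$, via the $\liminf$) with $\int_{B_\rho}|\widehat{u}_0|^2\dd\xi \ge a\rho^{2r+n}$. Discarding the high-frequency contribution (the integrand is nonnegative), I would restrict the integral to a ball $B_{R(t)}$ with $R(t)\simeq t^{-1/(2\alpha)}$, on which $e^{-\kappa t|\xi|^{2\alpha}}\ge e^{-\kappa}$ is bounded below by a positive constant; hence
\[
 \int_{\R^n} e^{-\kappa t|\xi|^{2\alpha}}|\widehat{u}_0(\xi)|^2\dd\xi \ge e^{-\kappa}\int_{B_{R(t)}}|\widehat{u}_0(\xi)|^2\dd\xi \gtrsim R(t)^{2r+n}\simeq t^{-(r+n/2)/\alpha}.
\]
Again I convert $t^{-(r+n/2)/\alpha}$ into $(1+t)^{-(r+n/2)/\alpha}$, which for the lower bound requires a separate argument for small $t$: since $u_0\neq0$ forces $\|u(t)\|_2^2$ to stay bounded below on any compact time interval (by continuity and $\|u(0)\|_2>0$), the estimate extends down to $t=0$.

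The main obstacle is the low-frequency analysis in the upper bound: one cannot simply bound $|\widehat{u}_0(\xi)|^2$ pointwise (only its integral over balls is controlled), so the clean way is to phrase the estimate as a Stieltjes integral against the nondecreasing function $\rho\mapsto\int_{B_\rho}|\widehat u_0|^2$ and integrate by parts, which is exactly where the $\limsup$ hypothesis $P_r(u_0)_+<\infty$ enters through the bound $\mu(B_\rho)\lesssim\rho^{2r+n}$. Getting the constant uniform in $t$ and correctly matching the optimal radius $R(t)\simeq t^{-1/(2\alpha)}$ to produce the exponent $-(r+n/2)/\alpha$ is the crux; the lower bound, by contrast, is essentially immediate once the ball is chosen at the right scale.
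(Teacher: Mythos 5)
Your lower bound is correct and is essentially the paper's own argument: restrict the Plancherel integral to the ball of radius $\rho(t)\simeq(1+t)^{-1/(2\alpha)}$, bound the exponential below by a constant there, and use that $P_r(u_0)_-=\liminf_{\rho\to0^+}\Phi_r(\rho)>0$ forces $\Phi_r(\rho)\ge c_0$ for \emph{all} sufficiently small $\rho$ (you correctly note this; a mere subsequence would not do, since the radius varies continuously with $t$). Your pointwise two-sided bound $e^{-c't|\xi|^{2\alpha}}|\widehat u_0(\xi)|\le|\widehat u(\xi,t)|\le e^{-ct|\xi|^{2\alpha}}|\widehat u_0(\xi)|$, coming from the orthogonality of $P(\xi)$, is also valid.

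The upper bound is where you depart from the paper, and where there is a genuine gap. You split at $R(t)\simeq t^{-1/(2\alpha)}$ and dispose of the high frequencies by the factor $e^{-\kappa tR(t)^{2\alpha}}$, claiming it ``decays faster than any power''. But with this choice of $R(t)$ the exponent $tR(t)^{2\alpha}$ is a \emph{constant}, so $e^{-\kappa tR(t)^{2\alpha}}\|u_0\|_2^2$ is merely a constant; your two regions together give only $\|u(t)\|_2^2\lesssim t^{-(r+n/2)/\alpha}+C$, which is useless for large $t$. The decay of the frequencies between $R(t)$ and the fixed radius $\rho_0$ (below which $\mu(B_\rho):=\int_{B_\rho}|\widehat u_0|^2\le A\rho^{2r+n}$ holds) must be extracted explicitly. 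Two standard repairs: (a) run the layer-cake/Fubini argument you sketch over the \emph{whole} fixed ball $B_{\rho_0}$, writing $e^{-\kappa t|\xi|^{2\alpha}}=2\alpha\kappa t\int_{|\xi|}^\infty s^{2\alpha-1}e^{-\kappa ts^{2\alpha}}\dd s$ and using $\mu(B_s)\le As^{2r+n}$ for $s\le\rho_0$; the change of variables $v=ts^{2\alpha}$ then yields $t^{-(2r+n)/(2\alpha)}$ (convergence at $v=0$ uses $2r+n>0$, i.e. $r>-n/2$), while the truly high frequencies $|\xi|>\rho_0$ contribute the genuinely exponential term $e^{-\kappa t\rho_0^{2\alpha}}\|u_0\|_2^2$; or (b) follow the paper, which avoids estimating high frequencies of $\widehat u_0$ altogether by Fourier splitting: the energy identity gives $\frac{\dd}{\dd t}\|u(t)\|_2^2\le -C\int|\xi|^{2\alpha}|\widehat u|^2\dd\xi$, the dissipation absorbs everything outside $B_{\rho(t)}$, only the ball estimate $\int_{|\xi|\le\rho(t)}|\widehat u|^2\lesssim\rho(t)^{2r+n}$ is needed, and an integrating factor $(1+t)^M$ with $M>r+n/2$ closes the bound. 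Your direct spectral route is legitimate and more elementary than the paper's, but only after the intermediate-frequency contribution is summed correctly.
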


 \begin{proof}
 The proof below is a minor modification to that of~\cite{BjoS09}*{Theorem~5.7} (for the particular case $\mathcal{L}=\Delta$)
 or that of~\cite{NicS15}*{Theorem~2.10} (for the general case). 

We have
\[
|e^{\mathcal{M}(\xi)t}\widehat u_0(\xi)|=
|P(\xi)^{-1}e^{D(\xi)t}P(\xi)\widehat u_0(\xi)|
\gtrsim e^{-c|\xi|^{2\alpha}t}|\widehat u_0(\xi)|.
\]
Hence, for any function $\rho=\rho(t)$,
\[
\begin{split}
 \|u(t)\|_2^2
 &\ge \int_{|\xi|\le \rho(t)} |e^{\mathcal{M}(\xi)t} \widehat u_0(\xi)|^2\dd \xi\\
 &\gtrsim  \int_{|\xi|\le \rho(t)} e^{-2c|\xi|^{2\alpha}t}|\widehat u_0(\xi)|^2 \dd\xi
 \ge e^{-2ct\rho(t)^{2\alpha}}\int_{|\xi|\le \rho(t)} |\widehat u_0(\xi)|^2 \dd\xi.
\end{split}
\]
Let $\Phi_r(\rho)=\rho^{-2r-n}\int_{|\xi|\le \rho}|\widehat u_0(\xi)|^2\dd\xi$.
Under the assumption of the first assertion and by the definition of $\liminf$,  we have
$0<P_r(u_0)_-=\lim_{\epsilon\to0^+}\inf_{\rho\in (0,\epsilon]}\Phi_r(\rho)$.
Hence, for some $c_0,\rho_0>0$ and all $0<\rho\le\rho_0$,
we have $\Phi_r(\rho)>c_0$.
The choice $\rho(t)=\rho_0(1+t)^{-1/(2\alpha)}$ then
leads to
\[
 \|u(t)\|_2^2 \gtrsim 
 e^{-2ct\rho(t)^{2\alpha}}
 \rho(t)^{2r+n} \Phi_r(\rho(t))
 \gtrsim \rho(t)^{2r+n}
 \]
and the first claim follows.

Let us prove the second assertion, following again the steps of~\cite{NicS15}.
First of all, the assumption on the symbol $\mathcal{M}(\xi)$ allows us to write
\[
 \begin{split}
 \frac12
  \frac{\dd }{\dd t}\|u(t)\|_2^2
  &=\langle \widehat u,P^{-1}DP\widehat u\rangle  = -\langle (-D)^{-1}P\widehat u,(-D)^{-1}P\widehat u \rangle\\
  &=-\int|(-D)^{-1/2} P\widehat u|^2\dd \xi.
 \end{split}
\]
This insures the 
validity of the energy inequality
\[
\frac{\dd }{\dd t}\|u(t)\|_2^2\le -C\int |\xi|^{2\alpha}|\widehat u(\xi,t)|^2\dd\xi.
\]
The classical Fourier splitting idea \cite{Sch93} is then used to
deduce the estimate
\[
\frac{\dd }{\dd t}\|u\|_2^2 + \rho(t)^{2\alpha}\|u(t)\|_2^2
\lesssim \rho(t)^{2\alpha}\int_{|\xi|\le \rho(t)}|\widehat u(\xi,t)|^2\dd\xi.
\]
Our assumption now reads 
$P_r(u_0)_+=\lim_{\epsilon\to0^+}\sup_{\rho\in(0,\epsilon]}\Phi_r(\rho)<\infty$.
It implies that for some $\rho_0,C>0$ and all $0<\rho\le\rho_0$,
we have
$\Phi_r(\rho)=\rho^{-2r-n}\int_{|\xi|\le \rho(t)}|\widehat u_0(\xi)|^2\dd\xi\le C$.
Hence,
\[
\rho(t)^{2\alpha}\int_{|\xi|\le \rho(t)}|\widehat u(\xi,t)|^2\dd\xi
\lesssim \rho(t)^{2\alpha+2r+n}.
\]
Combining the two last estimates, choosing now $\rho(t)^{2\alpha}=M(1+t)^{-1}$ with $M>r+n/2$, and multiplying by 
the integrating factor $(1+t)^M$ we arrive at
\[
\frac{\dd }{\dd t}\bigl((1+t)^M\|u(t)\|_2^2\bigr)\lesssim (1+t)^{M-1-(2r+n)/(2\alpha)}.
\]
The upper bound follows by integration.
\end{proof}

The following theorem clearly illustrates the importance of the notion of decay character.

\begin{theorem}
\mbox{}\\
\label{th:BS}
\begin{enumerate}
\item
\label{item:t1}
Let~$u_0\in L^2(\R^n)$ be such that the decay character $r^*=r^*(u_0)\in (-n/2,+\infty)$ does exist.
Let~$\mathcal{L}$ as in Proposition~\ref{prop:ulb}, and~$u$ be
the solution of the problem~\eqref{lin:pro}.
Then for some $C_1,C_2>0$, and all positive~$t$,
\begin{equation}
\label{eq:twoside}
 C_1(1+t)^{-\frac{1}{\alpha}(r^*+n/2)}\le \|u(t)\|_2^2 \le C_2(1+t)^{-\frac{1}{\alpha}(r^*+n/2)}.
\end{equation}
\item
\label{item:t2}
Conversely, if  $u_0\in L^2(\R^n)$ is such that 
the solution~$u$ of the problem~\eqref{lin:pro} satisfies estimates~\eqref{eq:twoside} with $r^*\in(-n/2,\infty)$
then $u_0$ possess a decay character and $r^*(u_0)=r^*$.
\end{enumerate}
\end{theorem}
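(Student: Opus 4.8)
The first assertion is an immediate consequence of Proposition~\ref{prop:ulb} combined with Remark~\ref{rem:exist}. Indeed, since the decay character $r^*(u_0)\in(-n/2,\infty)$ is assumed to exist, Remark~\ref{rem:exist} gives $0<P_{r^*}(u_0)_-\le P_{r^*}(u_0)_+<\infty$. Applying the lower bound of Proposition~\ref{prop:ulb} with $r=r^*$ (legitimate because $P_{r^*}(u_0)_->0$) and its upper bound (legitimate because $P_{r^*}(u_0)_+<\infty$) yields at once the two-sided estimate~\eqref{eq:twoside}, with the correct exponent $\frac1\alpha(r^*+n/2)$.

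The converse is the substantial part. The plan is to reverse the mechanism of Proposition~\ref{prop:ulb} and read off both decay indicators directly from the two-sided bound on $\|u(t)\|_2^2$. Since $P(\xi)$ is orthogonal and the eigenvalues satisfy $c\le c_i\le C$ for suitable $C\ge c>0$, one has the pointwise comparison $e^{-2Ct|\xi|^{2\alpha}}|\widehat u_0(\xi)|^2\le|e^{\mathcal M(\xi)t}\widehat u_0(\xi)|^2\le e^{-2ct|\xi|^{2\alpha}}|\widehat u_0(\xi)|^2$. Setting $E(t)=\int_{\R^n}e^{-t|\xi|^{2\alpha}}|\widehat u_0(\xi)|^2\dd\xi$ and integrating, the hypothesis~\eqref{eq:twoside} is equivalent to $E(t)\simeq(1+t)^{-\beta}$, with $\beta=\frac1\alpha(r^*+n/2)$, since rescaling $t$ by the fixed constants $2c,2C$ does not affect a power-law rate. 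By Remark~\ref{rem:exist} it then suffices to prove that $0<P_{r^*}(u_0)_-\le P_{r^*}(u_0)_+<\infty$, equivalently that $\phi(\rho):=\int_{B_\rho}|\widehat u_0(\xi)|^2\dd\xi$ satisfies $\phi(\rho)\simeq\rho^{2r^*+n}$ as $\rho\to0^+$, i.e. $\Phi_{r^*}(\rho)\simeq1$.

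For the upper indicator I would use $e^{-t|\xi|^{2\alpha}}\ge e^{-t\rho^{2\alpha}}$ on $B_\rho$, whence $\phi(\rho)\le e^{t\rho^{2\alpha}}E(t)$; the choice $t=\rho^{-2\alpha}$ together with $E(t)\lesssim(1+t)^{-\beta}$ gives $\phi(\rho)\lesssim\rho^{2r^*+n}$ for small $\rho$, that is $P_{r^*}(u_0)_+<\infty$. The lower indicator is the main obstacle, because the lower estimate on $E(t)$ involves the whole frequency space and must be localized near the origin. Here I would split $E(t)=\int_{B_\rho}+\int_{B_\rho^c}$, bound the inner part trivially by $\phi(\rho)$, and bound the tail, after integrating by parts (the boundary term being nonpositive), by $2\alpha t\int_\rho^\infty\phi(s)s^{2\alpha-1}e^{-ts^{2\alpha}}\dd s$.

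Feeding the upper bound $\phi(s)\lesssim s^{2r^*+n}$ just obtained into this tail and rescaling $\tau=t^{1/(2\alpha)}s$ shows that, for $t=K\rho^{-2\alpha}$, the tail coming from $B_{\rho_0}\setminus B_\rho$ is at most $K^{-\beta}\rho^{2r^*+n}$ times a factor $I(K)=\int_{K^{1/(2\alpha)}}^\infty\tau^{2r^*+n+2\alpha-1}e^{-\tau^{2\alpha}}\dd\tau$ which tends to $0$ as $K\to\infty$, plus a remainder from $B_{\rho_0}^c$ of size $\lesssim e^{-K(\rho_0/\rho)^{2\alpha}}$ that is negligible against any power of $\rho$ as $\rho\to0$. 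Comparing with $E(K\rho^{-2\alpha})\gtrsim K^{-\beta}\rho^{2r^*+n}$ and then choosing $K$ large enough to absorb the term $I(K)$, one gets $\phi(\rho)\gtrsim\rho^{2r^*+n}$, hence $P_{r^*}(u_0)_->0$. Combined with the upper indicator and Remark~\ref{rem:exist}, this shows that $r^*(u_0)$ exists and equals $r^*$. The delicate point throughout is the tail estimate in the lower bound, which is exactly where the previously established upper bound on $\phi$ is indispensable.
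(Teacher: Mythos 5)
Your proof of assertion~(\ref{item:t1}) is the same as the paper's: Proposition~\ref{prop:ulb} applied at $r=r^*$, the hypotheses $P_{r^*}(u_0)_->0$ and $P_{r^*}(u_0)_+<\infty$ being supplied by Remark~\ref{rem:exist}. For the converse you take a genuinely different route. The paper deduces assertion~(\ref{item:t2}) in two lines from machinery built elsewhere: by Theorem~\ref{th:cara2}--(\ref{ite:3}) the two-sided bound~\eqref{eq:twoside} is equivalent to $u_0\in L^2(\R^n)\cap\dot{\mathcal{A}}^{-2r^*-n}_{2,\infty}$, and by Proposition~\ref{prop:3crit} membership in that Littlewood--Paley set is equivalent to the existence of the decay character with value $r^*$. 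You instead argue directly on the Fourier side with the spectral distribution function $\phi(\rho)=\int_{B_\rho}|\widehat u_0|^2\,d\xi$, in Tauberian style: reduce~\eqref{eq:twoside} to $E(t)\simeq(1+t)^{-\beta}$ using the orthogonality of $P(\xi)$ and the finitely many eigenvalues $c_i$; obtain $\phi(\rho)\lesssim\rho^{2r^*+n}$ from the trivial localization $\phi(\rho)\le e^{t\rho^{2\alpha}}E(t)$ at $t=\rho^{-2\alpha}$; then obtain the matching lower bound by splitting $E(K\rho^{-2\alpha})$ at $|\xi|=\rho$ and controlling the tail by reinjecting the upper bound on $\phi$, with $K$ fixed large enough that the tail constant $I(K)$ is absorbed and the contribution from $|\xi|>\rho_0$ is super-polynomially small as $\rho\to0$. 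I checked the scaling ($2\alpha t\cdot t^{-\beta-1}=t^{-\beta}$ with $\beta=\frac1\alpha(r^*+n/2)$), the sign of the boundary term in the integration by parts, and the order of quantifiers ($K$ chosen before $\rho\to0$): the argument is correct, and the conclusion $0<P_{r^*}(u_0)_-\le P_{r^*}(u_0)_+<\infty$ combined with Remark~\ref{rem:exist} does yield $r^*(u_0)=r^*$. What the paper's route buys is economy and the Besov-set characterization as a by-product; what yours buys is self-containedness — no Littlewood--Paley decomposition at all, only elementary manipulations of $E(t)$ and $\phi(\rho)$ — at the price of the somewhat delicate bootstrapping of the upper bound into the tail estimate, which you correctly identify as the crux.
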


Assertion~\eqref{item:t1} appears also in~\cites{BjoS09,NicS15}, but with their more restrictive definition of $r^*(u_0)$.
This first assertion is an immediate consequence of the estimates obtained in Proposition~\ref{prop:ulb}. 
On the other hand, the validity of assertion~\eqref{item:t2} is made possible
by the fact that, compared to~\cites{BjoS09,NicS15}, in our definition 
we relaxed a little bit the requirements for the existence of the decay character.
The proof of the converse part of Theorem~\ref{th:BS} will be postponed in Remark~\ref{re:pro}, after the characterization of decay characters in terms
of subsets of Besov spaces.


\section{Besov space approach to two-side bounds for  linear dissipative systems}

Let us recall the definition of the homogeneous Besov spaces {\it via\/} the Littlewood-Paley analysis.
Let $\varphi$ be a smooth radial function with support contained in the annulus 
$\{\xi\in \R^n\colon 3/4\le|\xi|\le8/3\}$, such that $\sum_{j\in\Z} \varphi(\xi/2^{-j})=1$ for all 
$\xi\in\R^n\backslash\{0\}$.
Let $\Delta_j$ be the usual Littlewood--Paley localization operator around the frequency $|\xi|\simeq 2^j$, $j\in\Z$, namely, $\widehat{\Delta_j f}=\varphi(\cdot/2^{j})\widehat f$,
see~\cite{BahCD11}.
In this paper we will only need to consider the case $s< n/p$, $1\le p,q\le +\infty$. In this case, the elements of $\dot B^{s,q}_p$ 
can be realized as tempered distributions:

\[
 \dot B^{s}_{p,q}= \Bigl\{ f\in  \mathcal{S}'(\R^n)\colon
 f=\sum_{j\in\Z}\Delta_j f, \;\text{the series being convergent in}\;  \mathcal{S}'(\R^n),\;  \bigl(2^{js}\|\Delta_j f\|_p\bigr)_j \in \ell^q(\Z)\Bigr\},
 \]
normed by
\[
 \qquad
\| f\|_{\dot B^{s}_{p,q}}=\Bigl\|  \bigl(2^{js}\|\Delta_j f\|_p \bigr) \Bigr\|_{\ell^q(\Z)}.
\]

The classical characterization of Besov spaces with negative regularity
 in terms of the heat kernel reads (see, \emph{e.g.}, \cite{BahCD11}*{Theorem~3.4}):
 \begin{equation}
 \label{eq:usub}
  f\in{\dot B}^{-2\sigma}_{p,q} \iff t^\sigma\|e^{t\Delta}f\|_p \in L^q(\R^+,\dd t/t), \qquad \sigma>0,\quad1\le p,q\le \infty,
 \end{equation}
with equivalence of the corresponding norms.
In particular, for $u_0\in L^2(\R^n)$, we have
$u_0\in \dot B^{-2\sigma}_{2,\infty}$ if and only if $\|e^{t\Delta}u_0\|_2\lesssim (1+t)^{-\sigma}$.

\subsection{Two useful subsets of Besov spaces}
\label{sec:subset}
%
We now introduce two subsets of~$\dot B^{-\sigma}_{2,\infty}$:
\[
\dot{\A}^{-\sigma}_{2,\infty}
=\left\{ f\in \dot B^{-\sigma}_{2,\infty} 
\colon 
 \exists\, c,C,M>0\;\text{s.t.}
 \biggl\{
\begin{array}{ll}
 \\
 \|\Delta_j f\|_2\le C2^{\sigma j}  &\forall j\in \Z\\
 \|\Delta_j f\|_2\ge c2^{\sigma j} &\text{for at least one $j\in\Z$ in}\\
 &\text{any interval of lenght~$M$}
 \end{array}
 \right\}
\]
and
\[
\begin{split}
\dot{\mathcal{A}}^{-\sigma}_{2,\infty}
=\biggl\{ f\in   \dot B^{-\sigma}_{2,\infty} 
\colon 
 &\exists\, c,C>0, \;\exists (j_k)_{k\in\N}\subset\Z\;\text{s.t.}\; j_k\to-\infty,\\
 &(j_k-j_{k+1})\in\ell^\infty(\N),\;\text{and}\;
 \biggl\{
\begin{array}{ll}
 \|\Delta_j f\|_2\le C2^{\sigma j}  &\forall j\in \Z\\
 \|\Delta_{j_k} f\|_2\ge c2^{\sigma j_k} &\forall k\in\N
 \end{array}
 \biggr\}.
 \end{split}
\]
These sets are not closed under summation and do not have a linear structure.
We point out the inclusions
\[
\dot{\A}^{-\sigma}_{2,\infty}\subset\dot{\mathcal{A}}^{-\sigma}_{2,\infty}
\subset
\dot B^{-\sigma}_{2,\infty}.\]

It is worth observing that the intersection $L^2(\R^n)\cap \dot{\A}^{-\sigma}_{2,\infty}$ is empty, since dyadic blocks of 
$L^2$ functions satisfy $\|\Delta_j f\|_2\le \|f\|_2$ for all $j\in\Z$.
On the other hand, $\dot{\mathcal{A}}^{-\sigma}_{2,\infty}$ has a nontrivial intersection with $L^2(\R^n)$,
which we will characterize below.

%

\subsection{Two-side bounds for the heat kernel}

Our first applications of the sets $\dot{\A}^{-2\sigma}_{2,\infty}$ and $\dot{\mathcal{A}}^{-2\sigma}_{2,\infty}$
is the following theorem.

\begin{theorem}
\label{th:cara}
 Let $\sigma>0$ and $f$ be a tempered distribution.
 \begin{enumerate}
 \item
 Then,
 $f\in \dot\A^{-2\sigma}_{2,\infty}$ if and only if
 \begin{equation}
  \label{eq:heat-lu}
  \exists\, c_1,c_2>0\quad\text{such that}\quad
  c_1 \,t ^{-\sigma}\le \|e^{t\Delta} f\|_2 \le c_2 \, t^{-\sigma}, \qquad\text{for all $t>0$.}
 \end{equation}
\item
 Moreover,
$f\in \dot{\mathcal{A}}^{-2\sigma}_{2,\infty}$ if and only if
\begin{equation}
  \label{eq:heat-liminf}
   t^\sigma\,\| e^{t\Delta}f\|_2 \in L^\infty(\R^+), \quad\text{and}\quad 
  \liminf_{t\to+\infty}\,t^\sigma\,\|e^{t\Delta}f\|_2>0.
 \end{equation}
 \item In addition,
 $f\in  L^2(\R^n)\cap \dot{\mathcal{A}}^{-2\sigma}_{2,\infty}$ if and only if
 \begin{equation} 
 \label{eq:heat-lub}
 \exists\, c,c'>0\quad\text{such that}\quad
  c \,(1+t)^{-\sigma}\le \|e^{t\Delta} f\|_2 \le c' \, (1+t)^{-\sigma}, \qquad\text{for all $t>0$.}
 \end{equation}
 \end{enumerate}
 \end{theorem}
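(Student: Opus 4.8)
The plan is to reduce all three equivalences to a single two-sided comparison between the heat-flow $L^2$-norm and the dyadic energies, namely to the estimate
\[
c_* \sum_{j\in\Z} e^{-A\,t\,2^{2j}}\|\Delta_j f\|_2^2 \;\le\; \|e^{t\Delta}f\|_2^2 \;\le\; C_*\sum_{j\in\Z} e^{-B\,t\,2^{2j}}\|\Delta_j f\|_2^2 ,
\]
valid for some constants $0<B<A$ and $0<c_*<C_*$. This follows from Plancherel, $\|e^{t\Delta}f\|_2^2=\int e^{-2t|\xi|^2}|\widehat f(\xi)|^2\dd\xi$, the bounded overlap of the supports of $\varphi(\cdot/2^j)$ (so that $\sum_j\varphi(\xi/2^j)^2\simeq1$), and the fact that on the support of $\widehat{\Delta_j f}$ one has $|\xi|\simeq2^j$, which controls $e^{-2t|\xi|^2}$ from above and below by $e^{-B t2^{2j}}$ and $e^{-A t2^{2j}}$. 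I would record once and for all that the upper condition $\|\Delta_j f\|_2\le C2^{2\sigma j}$ for all $j$ is exactly the statement $f\in\dot B^{-2\sigma}_{2,\infty}$, and that by~\eqref{eq:usub} (with $p=2$, $q=\infty$) this is in turn equivalent to $\sup_{t>0}t^\sigma\|e^{t\Delta}f\|_2<\infty$; this disposes of every upper-bound half of the three statements at once.

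For part (1), the implication $f\in\dot{\A}^{-2\sigma}_{2,\infty}\Rightarrow\eqref{eq:heat-lu}$ has an upper estimate given by the remark above, while the lower estimate comes from keeping a single term: given $t$, the interval $[-\tfrac12\log_2 t,\,-\tfrac12\log_2 t+M]$ contains an index $j_0$ with $\|\Delta_{j_0}f\|_2\ge c2^{2\sigma j_0}$ and $t2^{2j_0}$ bounded, so $\|e^{t\Delta}f\|_2^2\gtrsim e^{-At2^{2j_0}}2^{4\sigma j_0}\gtrsim t^{-2\sigma}$. For the converse, from $\|e^{t\Delta}f\|_2\le c_2t^{-\sigma}$ one recovers $\|\Delta_j f\|_2\le C2^{2\sigma j}$ by taking $t=2^{-2j}$ in the lower comparison. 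The real content is to produce, from $\|e^{t\Delta}f\|_2\ge c_1t^{-\sigma}$, a good index in every window of bounded length. I would argue by contradiction: if $\|\Delta_j f\|_2<\epsilon2^{2\sigma j}$ throughout a window of width $M$ centred at $a$, then evaluating the upper comparison at $t=2^{-2a}$ and splitting the sum into the window and its complement gives $\|e^{t\Delta}f\|_2^2\lesssim(\epsilon^2+C^2\delta_M)t^{-2\sigma}$, where $\delta_M=\sum_{|k|>M/2}e^{-B2^{2k}}2^{4\sigma k}\to0$ as $M\to\infty$, because the summand peaks at $k\simeq0$ and decays geometrically as $k\to-\infty$ and super-exponentially as $k\to+\infty$. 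Choosing $M$ large and then $\epsilon$ small contradicts the lower bound, so good indices occur in every window of length~$M$.

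Part (2) runs along identical lines, the only change being bookkeeping: the lower condition of $\dot{\mathcal{A}}^{-2\sigma}_{2,\infty}$ asks for good indices along a single sequence $j_k\to-\infty$ with bounded gaps rather than in every window, and this matches the purely asymptotic requirement $\liminf_{t\to+\infty}t^\sigma\|e^{t\Delta}f\|_2>0$ (only large $t$, hence only low frequencies, are involved). The forward implication again keeps one term $\|\Delta_{j_k}f\|_2$ with $t2^{2j_k}$ bounded; the converse collects, for $t=4^{-m}$ with $m\to-\infty$, the good index produced by the same window/contradiction argument, and the bounded gaps of the resulting set follow since consecutive values of $m$ yield good indices within a fixed distance of consecutive integers. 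Finally, part (3) is obtained from part (2) together with two soft facts about $f\in L^2$: the map $t\mapsto\|e^{t\Delta}f\|_2$ is non-increasing with $\|e^{t\Delta}f\|_2\uparrow\|f\|_2$ as $t\to0^+$ (monotone convergence on the Fourier side), so the $(1+t)^{-\sigma}$ upper bound is equivalent to the conjunction of $f\in L^2$ and $\|e^{t\Delta}f\|_2\lesssim t^{-\sigma}$, while the $(1+t)^{-\sigma}$ lower bound splits into the large-$t$ statement supplied by part (2) and, on the compact range of small $t$, a positive lower bound that is automatic from monotonicity once $f\neq0$.

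I expect the crux to be the window/contradiction step in the converse halves of (1) and (2): one must quantify how a block of small dyadic energies forces $\|e^{t\Delta}f\|_2$ to be small at the matching time scale, and the estimate only closes because the weights $e^{-B2^{2k}}2^{4\sigma k}$ concentrate near $k=0$ with summable tails, so that a sufficiently wide window of smallness dominates the heat-flow norm. Calibrating the window length $M$ against the tail sum $\delta_M$, and translating a bounded window into bounded gaps in part (2), is the only genuinely delicate point; everything else is either a one-term lower bound or a direct appeal to~\eqref{eq:usub}.
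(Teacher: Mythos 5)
Your proposal is correct and follows essentially the same route as the paper: the same Plancherel/Littlewood--Paley comparison $\|e^{t\Delta}f\|_2^2\simeq\sum_j e^{-ct4^j}\|\Delta_jf\|_2^2$ (the paper's Eq.~\eqref{eq:size}), the same reduction of all upper bounds to~\eqref{eq:usub}, the same one-term lower bound at the index matching $t\simeq4^{-j}$, and the same tail-sum calibration of the window length $M$ against the $\ell^1$ weights and the $\ell^\infty$ bound on $2^{-4j\sigma}\|\Delta_jf\|_2^2$ for the converse halves (you phrase this as a contradiction, the paper argues directly, but the estimate is identical), concluding part (3) with the same soft monotonicity and Fatou facts.
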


\begin{proof}
 The property $t^\sigma\,\| e^{t\Delta}f\|_2 \in L^\infty(\R^+)$  and the upper bound in~\eqref{eq:heat-lu},
are equivalent to $f\in \dot B^{-2\sigma}_{2,\infty}$, as noticed in~\eqref{eq:usub}.
In the same way, the upper bound in~\eqref{eq:heat-lub} is equivalent to the condition 
$f\in L^2(\R^n)\cap\dot B^{-2\sigma}_{2,\infty}$.
For this reason, in the proof of~Theorem~\ref{th:cara} we need to focus only on the lower bound properties.

Let us start with a simple preliminary computation.
For $t>0$, let $p=p(t)\in\Z$ such that $4^p\le t<4^{p+1}$.
Letting $d_j=\|\Delta_j f\|_2^2\, 2^{-4j\sigma}$ we have
(the positive constant $c$ is not the same from line to line):
\begin{equation}
 \label{eq:size}
 \begin{split}
  \| t^\sigma e^{t\Delta}f\|_2^2
  &\simeq \sum_{j\in\Z} t^{2\sigma} e^{-ct 2^{2j}} \|\Delta_j f\|_2^2\\
  &\simeq\sum_{j\in\Z} t^{2\sigma} e^{-ct 4^{j}}2^{4j\sigma}d_j\\
  &\simeq\sum_{j\in\Z} 4^{2j\sigma} e^{-c\,4^j} d_{j-p},
 \end{split}
\end{equation}
where in the last step was obtained by shifting the summation index.

Assume now  
$f\in \dot\A^{-2\sigma}_{2,\infty}$. Then, for some constant  $M>0$ as in the definition of~$\dot \A^{-2\sigma}_{2,\infty}$,
 and some constant $c>0$ independent on~$p$ we have ($c$ is not the same from line to line),
\[
\begin{split}
 \| t^\sigma e^{t\Delta}f\|_2^2  
 &\gtrsim  \sum_{|j|\le M} 4^{2j\sigma} e^{-c\,4^j} d_{j-p} \\
 &\gtrsim \max_{|j|\le M}{d_{j-p}} \gtrsim 1,
 \end{split}
\]
where in the last inequality we used 
 $\max_{|j|\le M} 2^{-4\sigma (j-p)}\|\Delta_{j-p} f\|_2^2\ge c$, that follows from the definition 
 of~$\dot{\A}^{-2\sigma}_{2,\infty}$.
 This proves the lower bound in~\eqref{eq:heat-lu}.

 If we assume instead the weaker condition $f\in\dot{\mathcal{A}}^{-2\sigma}_{2,\infty}$,
 then take $M=\| (j_k-j_{k+1})\|_{\ell^\infty}$, where $j_k\to-\infty$ is as in the definition
  of~$\dot{\mathcal{A}}^{-2\sigma}_{2,\infty}$.
 Then, there exist an integer $p_0$ and $c>0$ such that for all $p\ge p_0$ we have  
 $\max_{|j|\le M}{d_{j-p}}=\max_{|j|\le M} 2^{-4\sigma (j-p)}\|\Delta_{j-p} f\|_2^2\ge c>0$.
 As $c$ is independent on~$p$, and hence on $t$, at least in some interval $[t_0,+\infty)$, 
 we get now
 \[
\begin{split}
 \| t^\sigma e^{t\Delta}f\|_2^2  
 \gtrsim 1 \quad \text{on $[t_0,\infty)$}.
 \end{split}
\]
This in turn gives
\[
\liminf_{t\to+\infty}\,t^\sigma\,\|e^{t\Delta}f\|_2>0.
\]

Conversely, if we assume~\eqref{eq:heat-lu}, then  the lower bound in~\eqref{eq:heat-lu}  and  the computation 
at the beginning of the proof imply
\begin{equation}
\label{ababb}
\sum_{j\in\Z} 4^{2j\sigma} e^{-c\,4^j} d_{j-p} \ge  c'>0,
\end{equation}
with $4^p\simeq t$ and $c'$ independent on~$p$.
On the other hand, the upper bound~\eqref{eq:heat-lu} imply that $u_0\in \dot B^{-2\sigma}_{2,\infty}$, hence
$(d_j)\in \ell^\infty(\Z)$.
As $(4^{2j\sigma}e^{-c\,4^j})\in \ell^1(\Z)$, we can find $M>0$ such that 
\[ \sum_{|j|\ge M} 4^{2j\sigma}e^{-c\,4^j}<c'/(2\|d_j\|_{\ell^\infty}).\]
Combining the two last estimates we get
\[
 \begin{split}
 \sum_{|j|\le M} 4^{2j\sigma}e^{-c4^j}d_{j-p} \ge c'/2.
 \end{split}
\]
On the other hand,
$
 \sum_{|j|\le M} 4^{2j\sigma}e^{-c4^j}d_{j-p} \le C \max_{|j|\le M} d_{j-p}
$
for some $C>0$ independent on~$p$.
It then follows that, for some $c>0$ independent on~$p$,
\[
\max_{|j|\le M} d_{j-p} \ge c.
\]
This insures that $f\in  \dot\A^{-2\sigma}_{2,\infty}$.

If, instead of estimates~\eqref{eq:heat-lu} we have the weaker conditions~\eqref{eq:heat-liminf}, then~\eqref{ababb} holds true
only for all $p\ge p_0$, for some $p_0\in\Z$ large enough.
Applying the above argument to $p=p_0+kM$, $k\in\N$, 
we get, for some $c>0$ independent on~$k$,
\[
\max_{|j|\le M} d_{j-(p_0+kM)} \ge c.
\]
Therefore, we can then construct a sequence of integers $(j_k)$ such that
$j_k\to-\infty$, $|j_{k+1}-j_k|\le 3M$, and $d_{j_k}\ge c>0$ for all $k\in\N$.
This implies $f\in  \dot{\mathcal{A}}^{-2\sigma}_{2,\infty}$.

It only remains to prove the last claim of Theorem~\ref{th:cara}.
If $f\in L^2(\R^n)\cap \dot{\mathcal{A}}^{-2\sigma}_{2,\infty}$ then $\|e^{t\Delta}f\|_2^2\le \|f\|_2^2$ and 
the upper bound estimate~\eqref{eq:heat-lub}
immediately follows from the first of~\eqref{eq:heat-liminf}. 
The lower-bound estimate in~\eqref{eq:heat-lub} follows 
combining the fact that $\|e^{t\Delta}f\|_2^2\ge ct^{-\sigma}$, valid in $[t_0,+\infty)$ for some large enough $t_0>0$,
by the second of~\eqref{eq:heat-liminf}, with the inequality 
$\|e^{t\Delta}f\|_2^2\ge \|e^{t_0\Delta}f\|_2^2$, valid in the interval $[0,t_0]$.

Conversely, if estimates~\eqref{eq:heat-lub} hold then 
$f\in  \dot{\mathcal{A}}^{-2\sigma}_{2,\infty}$ by the previous part of the Theorem. Moreover, by Fatou's lemma, 
$\|f\|_2^2\le \liminf_{t\to0^+} \bigl(\int  e^{-2t|\xi|^2}f(\xi) \bigr)\dd \xi \le (c')^2<\infty$, so $f\in L^2(\R^n)$.
\end{proof}

\subsection{A Besov space characterization through $e^{t\mathcal{L}}$}

In order establish the natural generalization of Theorem~\ref{th:cara} to the solutions of the linear problem~\eqref{lin:pro},
we first need to extend the classical heat kernel characterization  of Besov space~\eqref{eq:usub} to the setting of the operator
$e^{t\mathcal{L}}$.
Specifically, we establish the following theorem.

\begin{theorem}
 \label{BCD:th2.24}
Let $\sigma>0$, $1\le p,q\le \infty$. Then for any $u\in\mathcal{S}'(\R^n)$ such that the Littlewood--Paley series 
$u=\sum_{j\in\Z} \Delta_j u$ converges in~$\mathcal{S}'(\R^n)$,
we have 
$f\in\dot B^{-2\sigma}_{p,q}(\R^n)$ if and only if $\|t^{\sigma/\alpha}e^{t\mathcal{L}}f\|_p \in L^q(\R^+,\dd t/t)$.
Moreover, the two norms
\[
\|f\|_{\dot B^{-2\sigma}_{p,q}}\qquad\text{and}\qquad
\Bigl\|\,\|t^{\sigma/\alpha}e^{t\mathcal{L}}f\|_p\Bigr\|_{L^q(\R^+,\dd t/t)}
\]
are equivalent.
\end{theorem}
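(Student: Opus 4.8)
The plan is to mimic the classical heat--kernel characterization~\eqref{eq:usub}, replacing the scalar multiplier $e^{-t|\xi|^2}$ by the matrix symbol $e^{t\mathcal M(\xi)}=P(\xi)^{-1}e^{tD(\xi)}P(\xi)$ and reducing everything, via Littlewood--Paley localization, to uniform $L^1$ estimates for the kernels of the associated Fourier multipliers. The key starting point is a scaling identity: writing $\xi=2^j\eta$ and using that $P$ is homogeneous of degree $0$ while $D(\xi)=-|\xi|^{2\alpha}\mathrm{diag}(c_i)$ is homogeneous of degree $2\alpha$, one finds
\[
 e^{t\mathcal M(2^j\eta)}=P(\eta)^{-1}e^{-\tau|\eta|^{2\alpha}\mathrm{diag}(c_i)}P(\eta)=:g_\tau(\eta),\qquad \tau:=t\,2^{2\alpha j},
\]
so the rescaled multiplier depends only on the single parameter $\tau$ and not on $j$ separately. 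Since $\Delta_j f$ is spectrally supported where $|\xi|\simeq2^j$, i.e.\ away from the origin where $P$ is smooth, I would fix a fattened cut-off $\tilde\varphi\equiv1$ on $\mathrm{supp}\,\varphi$ and record that $e^{t\mathcal L}\Delta_j f=\mathcal F^{-1}\big(\tilde g_\tau(\cdot/2^j)\,\widehat{\Delta_j f}\big)$ with $\tilde g_\tau=g_\tau\tilde\varphi$, whose convolution kernel is $2^{jn}K_\tau(2^j\cdot)$, $K_\tau=\mathcal F^{-1}\tilde g_\tau$.

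The proof then hinges on two kernel bounds. Because $|\eta|\ge 3/4$ on $\mathrm{supp}\,\tilde\varphi$ and $c_i\ge c>0$, differentiating $g_\tau$ and integrating by parts gives, for the \emph{forward} multiplier, $\|K_\tau\|_{L^1}\lesssim(1+\tau)^N e^{-c\tau}\lesssim e^{-c'\tau}$; while for the \emph{inverse} multiplier $\psi_\tau=\varphi\,P^{-1}e^{\tau|\eta|^{2\alpha}\mathrm{diag}(c_i)}P$ one has $\|\mathcal F^{-1}\psi_\tau\|_{L^1}\le A(\tau)<\infty$ with $A$ bounded on every compact $\tau$-interval (here $|\eta|^{2\alpha}$ stays bounded on $\mathrm{supp}\,\varphi$). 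By Young's inequality these yield, uniformly in $j$ and for \emph{all} $1\le p\le\infty$,
\[
 \|e^{t\mathcal L}\Delta_j f\|_p\lesssim e^{-c't2^{2\alpha j}}\|\Delta_j f\|_p,\qquad
 \|\Delta_j f\|_p\le A(t2^{2\alpha j})\,\|e^{t\mathcal L}f\|_p .
\]
Working with $L^1$ kernel bounds, rather than Mikhlin--H\"ormander multiplier theorems, is precisely what includes the endpoints $p=1,\infty$.

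For the direct implication I would write $e^{t\mathcal L}f=\sum_j e^{t\mathcal L}\Delta_j f$, apply the first estimate with $b_j:=2^{-2\sigma j}\|\Delta_j f\|_p$, and obtain $t^{\sigma/\alpha}\|e^{t\mathcal L}f\|_p\lesssim\sum_j\Psi(t2^{2\alpha j})b_j$ with $\Psi(\tau)=\tau^{\sigma/\alpha}e^{-c'\tau}$. After the change of variables $t=2^{2\alpha v}$ this is a convolution of the sequence $(b_j)$ with the rapidly decaying profile $v\mapsto\Psi(2^{2\alpha v})$, and a Young-type convolution inequality gives $\|t^{\sigma/\alpha}e^{t\mathcal L}f\|_{L^q(\R^+,\dd t/t)}\lesssim\|b\|_{\ell^q}=\|f\|_{\dot B^{-2\sigma}_{p,q}}$.

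For the converse I would use the second estimate on the tiling intervals $I_j=[2^{-2\alpha j},2^{-2\alpha(j-1)}]$, on which $\tau=t2^{2\alpha j}\in[1,2^{2\alpha}]$ forces $A(\tau)\lesssim1$ and $2^{-2\sigma j}\lesssim t^{\sigma/\alpha}$; hence $b_j\lesssim t^{\sigma/\alpha}\|e^{t\mathcal L}f\|_p$ for every $t\in I_j$, and integrating in $\dd t/t$ over $I_j$ and summing over $j$ (the $I_j$ partition $(0,\infty)$ with $\int_{I_j}\dd t/t=2\alpha\log2$) yields the reverse bound, with the usual $q=\infty$ modification. The main obstacle is genuinely the matrix structure: unlike the scalar heat case, one must verify the uniform $L^1$ kernel estimates for $g_\tau$ and $\psi_\tau$ with the correct $\tau$-dependence---exponential decay in one case, local boundedness in the other---which relies essentially on the smoothness and $0$-homogeneity of $P(\xi)$ away from the origin together with the uniform lower bound $c_i\ge c>0$.
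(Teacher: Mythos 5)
Your proposal is correct, and the forward implication is essentially the paper's argument: Littlewood--Paley localization plus the uniform $L^1$ kernel bound $\|K_\tau\|_{L^1}\lesssim e^{-c'\tau}$ for the frequency-localized propagator (this is exactly the paper's Lemma~\ref{BCD:lemma2.4}, proved there by the same rescaling to $\lambda=1$ and integration by parts with $(I-\Delta_\xi)^n$), followed by a summation in $j$ against the profile $\tau^{\sigma/\alpha}e^{-c'\tau}$. Where you genuinely diverge is the converse. The paper follows Bahouri--Chemin--Danchin and recovers $\Delta_j f$ from the reproducing identity $\Gamma(1+\sigma/\alpha)\widehat v=\int_0^\infty t^{\sigma/\alpha}(-D(\xi))^{1+\sigma/\alpha}e^{tD(\xi)}\widehat v\,\dd t$ applied to $\widehat v=P\widehat{\Delta_j f}$, which requires a Fourier multiplier lemma for $P^{-1}(-D)^{1+\sigma/\alpha}P$ on the annulus and the semigroup splitting $e^{t\mathcal L}=e^{t\mathcal L/2}e^{t\mathcal L/2}$, and then integrates over all $t\in(0,\infty)$. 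You instead invert the propagator on each dyadic block only for $t$ in the single tiling interval $I_j$, using that the backward multiplier $\varphi\,P^{-1}e^{\tau|\eta|^{2\alpha}\mathrm{diag}(c_i)}P$ has an $L^1$ kernel bounded locally uniformly in $\tau$, and then sum the resulting pointwise bounds $2^{-2\sigma j}\|\Delta_j f\|_p\lesssim t^{\sigma/\alpha}\|e^{t\mathcal L}f\|_p$ over the partition $\{I_j\}$. This is more elementary (no Gamma-function identity, no multiplier theorem for the fractional power of $-D$) and exploits the same structural inputs (smoothness and homogeneity of $P$ away from the origin, $c_i\ge c>0$); the paper's route has the advantage of being a verbatim adaptation of the textbook proof for the heat semigroup. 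The only step you should make explicit is the mixed discrete/continuous Young inequality in the forward direction: after the substitution $t=2^{2\alpha v}$ you are convolving the sequence $(b_j)$ with the function $v\mapsto\Psi(2^{2\alpha v})$, and you need this function in the Wiener amalgam class $W(L^\infty,\ell^1)$ (which it is, since it decays exponentially in $v$ as $v\to-\infty$ because $\sigma>0$, and double-exponentially as $v\to+\infty$), not merely in $L^1(\R)$.
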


\begin{proof}

The proof is an adaptation of that in~\cite{BahCD11}*{Theorem~2.34} for the special case $\mathcal{L}=\Delta$.
Recall that
\[
\mathcal{F}(e^{t\mathcal{L}}f)(\xi)=P(\xi)^{-1}e^{tD(\xi)}P(\xi) \widehat f(\xi),
\]
where $D(\xi)=-|\xi|^{2\alpha}D$, where $D=\mbox{diag}(c_1,\ldots,c_n)$, with $\alpha,c_j>0$,
and $P(\xi)$ is an orthogonal matrix. The entries $P(\xi)_{j,k}$ $(j,k=1,\ldots,m)$ are assumed to be homogeneous functions
smooth for $\xi\not=0$.

\begin{lemma}
 \label{BCD:lemma2.4}
 Let $p\in[1,\infty]$, and $f$ be a tempered distribution with support in the annulus $\lambda\mathcal{C}$, 
 where $\mathcal{C}=\{\xi\in\R^n\colon 0<r_1<|\xi|<r_2\}$ and $\lambda>0$.
Then, for all $t>0$.
\[
\|e^{t\mathcal{L}}f\|_p \le Ce^{-ct\lambda^{2\alpha}}\|f\|_p,
\]
for some $c,C>0$ independent on $\lambda$ and $t$.
\end{lemma}

\begin{proof}
The operator $e^{t\mathcal{L}}$ is a convolution operator of the form $e^{t\mathcal{L}}f=G(\cdot,t)*f$.
By the homogeneity assumptions on $P$ and $D$, the symbol
$\widehat{G}(\xi,t)=P(\xi)^{-1}e^{tD(\xi)}P(\xi)$ satisfies the scaling relation
$\widehat G(\xi,t)=\widehat G(\xi t^{1/(2\alpha)},1)$, and so 
 $G(\lambda x,t)=\lambda^{-n}G(x,t\lambda^{-2\alpha})$.

Replacing $\widehat f$ with its dilate $\widehat f(\lambda\cdot)$ we can reduce to the case $\lambda=1$.
So assume now $\lambda=1$ and consider a smooth cut-off function $\phi(\xi)$ identically equal to one on~$\mathcal{C}$ with
compact support bounded away from $\xi=0$.
As $f=\phi*f$, we have
\[
\begin{split}
 \|e^{t\mathcal{L}}f\|_p=\|G(\cdot,t)*\phi\|_1 \|f\|_p,
\end{split}
\]
and it only remains to show that $\| G(\cdot,t)*\phi\|_1\le Ce^{-ct}$.

Indeed, computing the inverse Fourier transform and integrating by parts,
\[
\begin{split}
 G(\cdot,t)*\phi(x)&=(2\pi)^{-n}\int e^{i\xi\cdot x}\phi(\xi)P(\xi)^{-1}e^{tD(\xi)}P(\xi)\dd\xi\\
&=(1+|x|^2)^{-n}\int \bigl((I-\Delta_\xi)^ne^{i\xi\cdot x}\bigr)\phi(\xi)P(\xi)^{-1}e^{tD(\xi)}P(\xi)\dd\xi\\
&=(1+|x|^2)^{-n}\int e^{i\xi\cdot x} \bigl((I-\Delta_\xi)^n\bigl(\phi(\xi)P(\xi)^{-1}e^{tD(\xi)}P(\xi)\bigr)\dd\xi\\
\end{split}
\]
But $(I-\Delta_\xi)^n\bigl(\phi(\xi)P(\xi)^{-1}e^{tD(\xi)}P(\xi)\bigr)$
can be bounded by a linear combination of functions of the form $|\psi_\beta(\xi)| \, |\partial^\beta_\xi e^{tD(\xi)}|$,
with $\psi_\beta(\xi)$ smooth with support contained in that of $\phi(\xi)$,  
$\beta\in\N^n$ and $|\beta|\le 2n$.
Hence, for all $\xi$, 
\[
\bigl|(I-\Delta_\xi)^n\bigl(\phi(\xi)P(\xi)^{-1}e^{-tD(\xi)}P(\xi)\bigr|\le C(1+t)^{2d} e^{-ct|\xi|^{2\alpha}}\chi(\xi),
\]
for some smooth $\chi$ with compact support contained in that of~$\phi$. 
It follows, 
\[
\|G(\cdot,t)*\phi\|_1\le Ce^{-ct}
\]
and the assertion of the Lemma follows.
\end{proof}

Now assume that $u\in \dot B^{-2\sigma}_{p,q}$.
Then, by Lemma~\ref{BCD:lemma2.4},
\begin{equation}
\label{esir}
\| t^{\sigma/\alpha}e^{t\mathcal{L}}f\|_p
\le\sum_{j\in\Z} t^{\sigma/\alpha} \|\Delta_j e^{t\mathcal{L}}f\|_p
\lesssim \sum_{j\in\Z} t^{\sigma/\alpha}4^{j\sigma}e^{-ct\, 4^{j\alpha}}\eta_j,
\end{equation}
with $(\eta_j)\in \ell^q$ and $\|(\eta_j)\|_{\ell^q}=\|f\|_{\dot B^{-2\sigma}_{p,q}}$.
But, for all $\sigma,c,\alpha>0$,
\begin{equation}
 \label{BCD:lemma2.35}
 \sup_{t>0}\sum_{j\in\Z}t^{\sigma/\alpha}4^{j\sigma}e^{-ct\,4^{j\alpha}}<\infty,
\end{equation}
as one can easily check splitting the summation in two terms corresponding to $j$ such that $4^j\le t^{-1/\alpha}$, next $4^j> t^{-1/\alpha}$.

In the case $q=\infty$, we readily deduce
\[ 
\|t^{\sigma/\alpha}e^{t\mathcal{L}}f\|_p \lesssim \|(\eta_j)\|_{\ell^\infty}\le \|f\|_{\dot B^{-2\sigma}_{p,\infty}},
\]
that agrees with~\eqref{eq:usub2}.

In the case $1\le q<\infty$,
we need to estimate the integral below first applying~\eqref{esir}, next H\"older inequality to the summation 
with weight $4^{j\sigma}e^{-ct\,4^{j\alpha}}$ and~\eqref{BCD:lemma2.35}, next Fubini's theorem and the definition
of the Gamma function:
\[
\begin{split}
 \int_0^\infty t^{\sigma q/\alpha}\|e^{t\mathcal{L}}f\|_p^q\dd t
 &\lesssim \int_0^t \biggl(\sum_{j\in\Z} t^{\sigma/\alpha} 4^{j\sigma}e^{-ct\, 4^{j\alpha}} \eta_j\biggr)^q\dd t/t\\
 &\lesssim \int_0^t \biggl(\sum_{j\in\Z} t^{\sigma/\alpha} 4^{j\sigma} e^{-ct\, 4^{j\alpha}} \eta_j^q\biggr)\dd t/t\\
 &\lesssim\Gamma(\sigma/\alpha)\sum_{j\in\Z} \eta_j^q 
 \lesssim \|u\|_{\dot B^{-2\sigma}_{p,q}}^q.
\end{split}
\]

\medskip
To prove the reverse estimate,
let us first establish the following identity:
\begin{equation*}
 \label{my:id}
\begin{split}
  \int_0^\infty t^{\sigma/\alpha} &(-D(\xi))^{1+\sigma/\alpha}e^{tD(\xi)} \widehat v(\xi)\dd t\\
  &= \int_0^\infty  t^{1+\sigma/\alpha}|\xi|^{2\alpha(1+\sigma/\alpha)} 
  	\mbox{diag}(c_1^{1+\sigma/\alpha} e^{-c_1t|\xi|^{2\alpha}}  ,\ldots, c_n^{1+\sigma/\alpha} e^{-c_n t|\xi|^{2\alpha}})
	\widehat v(\xi)\dd t/t\\
  &=\int_0^\infty t^{1+\sigma/\alpha}\mbox{diag}(c_1^{1+\sigma/\alpha}e^{-c_1t},\ldots,c_n^{1+\sigma/\alpha}e^{-c_nt})
      \widehat v(\xi)\dd t/t\\
  &=\Gamma(1+\sigma/\alpha)\widehat v(\xi)
\end{split}
\end{equation*}
Applying this to $\widehat v(\xi)=P(\xi)\widehat{\Delta_j f}(\xi)$ we obtain
\[
\begin{split}
\widehat{\Delta_j f}(\xi)
&=\frac{1}{\Gamma(1+\sigma/\alpha)}
\int_0^\infty t^{\sigma/\alpha}P(\xi)^{-1}(- D(\xi))^{1+\sigma/\alpha}e^{tD(\xi)}P(\xi)\widehat{\Delta_j u}(\xi)\dd t\\
&=\frac{1}{\Gamma(1+\sigma/\alpha)}
\int_0^\infty  t^{\sigma/\alpha}P(\xi)^{-1}(-D(\xi))^{1+\sigma/\alpha}P(\xi)\mathcal{F}({e^{t\mathcal{L}}\Delta_j u})(\xi)\dd t\\  
  \end{split}
\]
We now make use of the homogeneity properties of $P(\xi)$ and $D(\xi)$ and 
apply a classical Fourier multiplier theorem (see, \emph{e.g.} \cite{BahCD11}*{Lemma~2.2}),
next the identity $e^{t\mathcal{L}}=e^{t\mathcal{L}/2}e^{t\mathcal{L}/2}$ with Lemma~\ref{BCD:lemma2.4} to get
\begin{equation}
\label{eq:calc5}
\begin{split}
 \|\Delta_j f\|_p
 &\lesssim \int_0^\infty t^{\sigma/\alpha} 4^{\alpha j(1+\sigma/\alpha)} \|e^{t\mathcal{L}}\Delta_j f\|_p\dd t\\
 &\lesssim \int_0^\infty t^{\sigma/\alpha} 4^{\alpha j(1+\sigma/\alpha)} 
 	e^{-ct\,4^{j\alpha}}\|e^{t\mathcal{L}/2}\Delta_j f\|_p\dd t\\
 &\lesssim \int_0^\infty t^{\sigma/\alpha} 4^{\alpha j(1+\sigma/\alpha)} 
 	e^{-2ct\,4^{j\alpha}}\|e^{t\mathcal{L}} f\|_p\dd t\\	
\end{split}
 \end{equation}
When $q=\infty$ we just have to observe that
\begin{equation}
 \|\Delta_j f\|_p\lesssim 2^{2j\sigma}\sup_{t>0}\bigl( t^{\sigma/\alpha}\|e^{t\mathcal{L}}f\|_p\bigr)
\end{equation}
to conclude that 
\[\|f\|_{\dot{B}^{-2\sigma}_{p,\infty}}
\lesssim \sup_{t>0}\bigl( t^{\sigma/\alpha}\|e^{t\mathcal{L}}f\|_p\bigr) .
\]
When $1\le q<\infty$, we argue as follow:
\[
\begin{split}
 \|f\|_{\dot B^{-2\sigma}_{p,q}}^q
 = \sum_{j\in\Z} 2^{-2jq\sigma}\|\Delta_j f\|_p^q 
 &\lesssim \sum_{j\in\Z} 4^{-jq\sigma}
   	\biggl(
	\int_0^\infty t^{\sigma/\alpha} 4^{\alpha j(1+\sigma/\alpha)} e^{-ct\, 4^{j\alpha}}\|e^{t\mathcal{L}}f\|_p\dd t				\biggr)^q\\
 &\lesssim\sum_{j\in\Z} 4^{\alpha jq}\biggl(\int_0^\infty e^{-ct\, 4^{j\alpha}}\dd t\biggr)^{q-1}
 	\biggl(\int_0^\infty t^{q\sigma/\alpha} \|e^{t\mathcal{L}}f\|_p^q e^{-ct\, 4^{j\alpha}} \dd t\biggr)\\
 &\lesssim \sum_{j\in\Z} 4^{j\alpha}	
 	\biggl(\int_0^\infty t^{q\sigma/\alpha} \|e^{t\mathcal{L}}f\|_p^q e^{-ct\, 4^{j\alpha}} \dd t\biggr)\\
 &\int_0^\infty\lesssim \biggl(\sum_{j\in\Z} t 4^{\alpha j}e^{-ct\, 4^{j\alpha}}\biggr)  
 	t^{q\sigma/\alpha} \|e^{t\mathcal{L}}f\|_p^q \dd t/t\\
 &\lesssim \int_0^\infty	 t^{q\sigma/\alpha} \|e^{t\mathcal{L}}f\|_p^q \dd t/t\\
\end{split}
\]
Here we used first~\eqref{eq:calc5}, next H\"older inequality, and in the last inequality a particular 
case of Eq.~\eqref{BCD:lemma2.35}.
The proof of Theorem~\ref{BCD:th2.24} is now complete.
\end{proof}

We are now in a position of generalizing~Theorem~\ref{th:cara} to the case of the operator~$\mathcal{L}$.

\begin{theorem}
 \label{th:cara2}
Let $\sigma>0$ and $f$ be a tempered distribution.
 Let also $\mathcal{L}$ a pseudo-differential operator 
 as at the beginning of Section~\ref{sec:appli}
\begin{enumerate}
\item  \label{ite:1}
 Then,
 $f\in \dot\A^{-2\sigma}_{2,\infty}$ if and only if
 \begin{equation}
  \label{eq:heat-lu2}
  \exists c_1,c_2>0\quad\text{such that}\quad
  c_1 \,t ^{-\sigma/\alpha}\le \|e^{t\mathcal{L}}(t)\|_2 \le c_2 \, t^{-\sigma/\alpha}, \qquad\text{for all $t>0$.}
 \end{equation}
\item\label{ite:2}
 Moreover,
$f\in \dot{\mathcal{A}}^{-2\sigma}_{2,\infty}$ if and only if
\begin{equation}
  \label{eq:heat-liminf2}
   t^{\sigma/\alpha}\,\| e^{t\mathcal{L}}(t)\|_2 \in L^\infty(\R^+), \quad\text{and}\quad 
  \liminf_{t\to+\infty}\,t^{\sigma/\alpha}\,\|e^{t\mathcal{L}}f\|_2>0,
 \end{equation}
 \item \label{ite:3} 
 In addition,
 $f\in  L^2(\R^n)\cap \dot{\mathcal{A}}^{-2\sigma}_{2,\infty}$ if and only if
 \begin{equation} 
 \label{eq:heat-lub2}
 \exists c,c'>0\quad\text{such that}\quad
  c \,(1+t)^{-\sigma/\alpha}\le \|e^{t\mathcal{L}}(t)\|_2 \le c' \, (1+t)^{-\sigma/\alpha}, \qquad\text{for all $t>0$.}
 \end{equation}
 \end{enumerate}
\end{theorem}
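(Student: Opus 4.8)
The plan is to run the argument of Theorem~\ref{th:cara} essentially verbatim, after isolating the two places where the heat kernel enters and supplying their substitutes for $e^{t\mathcal{L}}$. The first place is the identification of the upper bound with Besov membership: there, the role played by~\eqref{eq:usub} for $e^{t\Delta}$ is now played by Theorem~\ref{BCD:th2.24}. Taking $p=2$ and $q=\infty$ in that theorem gives $f\in\dot B^{-2\sigma}_{2,\infty}$ if and only if $t^{\sigma/\alpha}\|e^{t\mathcal{L}}f\|_2\in L^\infty(\R^+)$, which is exactly the upper bound in~\eqref{eq:heat-lu2} and the first condition in~\eqref{eq:heat-liminf2}. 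For~\eqref{eq:heat-lub2} I would combine this with the contraction property $\|e^{t\mathcal{L}}f\|_2\le\|f\|_2$ --- valid because $P(\xi)$ is orthogonal and $e^{tD(\xi)}$ is diagonal with entries in $(0,1]$ --- together with Fatou's lemma, exactly as in the last paragraph of the proof of Theorem~\ref{th:cara}. Thus it remains only to treat the lower bound statements.

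The second and decisive place is the size computation~\eqref{eq:size}. First I would record the pointwise symbol bound: since $P(\xi)$ is orthogonal and $D(\xi)=-|\xi|^{2\alpha}\,\mathrm{diag}(c_1,\dots,c_m)$ with $0<c\le c_i\le c_{\max}:=\max_i c_i$, one has
\[
e^{-c_{\max}t|\xi|^{2\alpha}}\,|\widehat f(\xi)|\le|\mathcal{F}(e^{t\mathcal{L}}f)(\xi)|\le e^{-ct|\xi|^{2\alpha}}\,|\widehat f(\xi)|,
\]
so that, up to constants in the exponent that differ on the two sides,
\[
\|t^{\sigma/\alpha}e^{t\mathcal{L}}f\|_2^2\simeq\int t^{2\sigma/\alpha}e^{-ct|\xi|^{2\alpha}}|\widehat f(\xi)|^2\dd\xi.
\]
Decomposing into dyadic shells $|\xi|\simeq2^j$, where $|\xi|^{2\alpha}\simeq4^{\alpha j}$, and using the almost-orthogonality $\|f\|_2^2\simeq\sum_j\|\Delta_j f\|_2^2$ with bounded overlap, this yields the analogue of~\eqref{eq:size}, namely $\|t^{\sigma/\alpha}e^{t\mathcal{L}}f\|_2^2\simeq\sum_j t^{2\sigma/\alpha}e^{-ct4^{\alpha j}}\|\Delta_j f\|_2^2$.

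The only genuinely new bookkeeping is the index shift. Writing $d_j=2^{-4\sigma j}\|\Delta_j f\|_2^2$ and $t=4^{\alpha s}$ with $s=\tfrac1\alpha\log_4 t$, and setting $p=\lfloor s\rfloor$ so that $4^p\simeq t^{1/\alpha}$, the substitution $j\mapsto j-p$ absorbs the fractional part $\{s\}\in[0,1)$ into multiplicative constants (bounded since $\alpha$ is fixed), giving
\[
\|t^{\sigma/\alpha}e^{t\mathcal{L}}f\|_2^2\simeq\sum_j 4^{2\sigma j}e^{-c\,4^{\alpha j}}d_{j-p},\qquad 4^p\simeq t^{1/\alpha}.
\]
With this identity in hand, the proof of Theorem~\ref{th:cara} transcribes line by line: the only structural facts used there were $(4^{2\sigma j}e^{-c4^{\alpha j}})_j\in\ell^1(\Z)$ --- still true for every $\alpha,\sigma,c>0$ --- and $(d_j)\in\ell^\infty(\Z)$, which follows from Besov membership via Theorem~\ref{BCD:th2.24}; the relation $4^p\simeq t^{1/\alpha}$ replaces $4^p\simeq t$ but, since $p\to+\infty$ monotonically as $t\to+\infty$, every $\liminf$ and every ``interval of length $M$'' argument is unaffected, while the exponents in the conclusions become $-\sigma/\alpha$ as required. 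I expect the main obstacle to be precisely this dyadic size estimate together with its index shift: one must verify that the two-sided comparison survives even though the constant in the exponent $e^{-ct|\xi|^{2\alpha}}$ genuinely differs between the upper and lower bounds (being comparable to $c_{\min}$ on one side and $c_{\max}$ on the other), which is why every subsequent step is arranged to invoke only one-sided versions at a time, exactly as in the original proof.
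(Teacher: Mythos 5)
Your proposal is correct and follows essentially the same route as the paper: reduce the upper-bound statements to Theorem~\ref{BCD:th2.24}, replace the size computation~\eqref{eq:size} by its analogue with $e^{-ct4^{j\alpha}}$ and the shift $4^{p\alpha}\le t<4^{(p+1)\alpha}$, and then transcribe the proof of Theorem~\ref{th:cara}. Your explicit remarks on the two-sided symbol comparison (with different constants $c_{\min}$, $c_{\max}$ in the exponent, each used only one side at a time) and on the contraction/Fatou step for~\eqref{eq:heat-lub2} supply precisely the details the paper leaves as ``obvious modifications.''
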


\begin{proof}

Recalling that the multiplication by $P(\xi)$ and $P(\xi)^{-1}$ conserve the euclidean norm of a vector
and that $D(\xi)$ and $e^{tD(\xi)}$ are diagonal matrices, we can write
\[
\begin{split}
 \|e^{t\mathcal{L}}f\|_2^2
 &=\int|P(\xi)^{-1}e^{D(\xi)t}P(\xi)\widehat f(\xi)|^2\dd\xi\\
 &\simeq \int|e^{-ct|\xi|^{2\alpha}}\widehat f(\xi)|^2\dd\xi,
\end{split}
\]
By our previous theorem, we have the analogue of the characterization to~\eqref{eq:usub} for the operator $\mathcal{L}$,
namely, 
\begin{equation}
 \label{eq:usub2}
  f\in{\dot B}^{-2\sigma}_{p,q} \iff t^{\sigma/\alpha}\|e^{t\mathcal{L}}f\|_2 \in L^q(\R^+,\dd t/t),
 \end{equation}
with the equivalence of the respective norms. 

We now argue as in \eqref{eq:size}: We set as before $d_j=\|\Delta_j f\|_2^2\, 2^{-4j\sigma}$.
For $t>0$, let $p=p(t)\in\Z$ such that $4^{p\alpha}\le t<4^{(p+1)\alpha}$. 
Then we have 
\begin{equation}
 \label{eq:size2}
 \begin{split}
  \| t^{\sigma/\alpha} e^{t\mathcal{L}}f\|_2^2
  &\simeq \sum_{j\in\Z} t^{2\sigma/\alpha} e^{-ct 2^{2j\alpha}} \|\Delta_j f\|_2^2\\
  &\simeq\sum_{j\in\Z} t^{2\sigma/\alpha} e^{-ct 4^{j\alpha}}2^{4j\sigma}d_j\\
  &\simeq\sum_{j\in\Z} 4^{2j\sigma} e^{-c\,4^{j\alpha}} d_{j-p},
 \end{split}
\end{equation}
The proof can now be carried on making obvious modification to that of~Theorem~\ref{th:cara}.
\end{proof}


\section{Characterization of $L^2$-functions admitting a decay character}

For any $u_0\in L^2(\R^n)$,  we introduce the quantity
$\sigma(u_0)\in [0,+\infty]$ given by
\begin{equation}
\label{eq:bes-ch}
\sigma(u_0)=\sup\bigl\{\sigma\ge0\colon u_0\in \dot B^{-\sigma}_{2,\infty}\bigr\}.
\end{equation}
Notice that, for $u_0\in L^2(\R^n)$, $\sigma(u_0)$ is always well defined in $[0,+\infty]$.
Indeed, by the Plancherel theorem, $L^2(\R^n)=\dot B^{0}_{2,2}\subset \dot B^{0}_{2,\infty}$.

The relation between the decay character~$r^*(u_0)$ and $\sigma(u_0)$ is given by the following simple result.
\begin{proposition}
\label{prop:obv}
Let $u_0\in L^2(\R^n)$.
When the decay character of $u_0$ does exist, it is given by the formula
(valid also in the limit cases $r^*(u_0)=-n/2$ or $\infty$):
\begin{equation}
\label{eq:relation}
 r^\ast(u_0) +n/2=\sigma(u_0).
\end{equation}
\end{proposition}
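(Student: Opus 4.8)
The plan is to prove the stronger identity $\sigma(u_0)=r(u_0)_++n/2$ for \emph{every} $u_0\in L^2(\R^n)$, and then to observe that whenever the decay character exists one has $r^*(u_0)=r(u_0)_+$. Indeed, in the finite case Definition~\ref{def:dc} gives $r^*(u_0)=\max\{r\colon P_r(u_0)_+<\infty\}=r(u_0)_+$, while in the two limit situations the conventions $r^*(u_0)=+\infty$ and $r^*(u_0)=-n/2$ were defined precisely so that $r^*(u_0)=r(u_0)_+=r(u_0)_-$. Hence \eqref{eq:relation} will follow at once from the identity $\sigma(u_0)=r(u_0)_++n/2$, uniformly including the limit cases.

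The heart of the matter is the equivalence, valid for $u_0\in L^2(\R^n)$ and any $\sigma>0$,
\[
u_0\in\dot B^{-\sigma}_{2,\infty}\iff P_{\sigma-n/2}(u_0)_+<\infty.
\]
To prove it I would compare the dyadic blocks with the ball integrals $I(\rho)=\int_{B_\rho}|\widehat u_0(\xi)|^2\dd\xi$. For the forward direction, the support of $\varphi$ in the annulus $\{3/4\le|\xi|\le8/3\}$ gives $\|\Delta_j u_0\|_2^2\lesssim I(\tfrac83 2^j)$; so if $I(\rho)\lesssim\rho^{2\sigma}$ as $\rho\to0^+$ (which is exactly $P_{\sigma-n/2}(u_0)_+<\infty$, since $-2(\sigma-n/2)-n=-2\sigma$), then $\|\Delta_j u_0\|_2\lesssim 2^{j\sigma}$ for $j\to-\infty$, while for $j\ge0$ the bound $\|\Delta_j u_0\|_2\le\|u_0\|_2$ suffices because $2^{j\sigma}\ge1$ (the finitely many remaining $j$ being harmless); hence $u_0\in\dot B^{-\sigma}_{2,\infty}$. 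For the converse, the finite overlap of the Littlewood--Paley supports yields $I(\rho)\lesssim\sum_{2^j\lesssim\rho}\|\Delta_j u_0\|_2^2$, and if $\|\Delta_j u_0\|_2\lesssim2^{j\sigma}$ then the geometric series $\sum_{2^j\lesssim\rho}2^{2j\sigma}$ converges (here we use $\sigma>0$) and is $\simeq\rho^{2\sigma}$, so $P_{\sigma-n/2}(u_0)_+<\infty$. Alternatively, for $r^*\in(-n/2,\infty)$ one can shortcut this via Theorem~\ref{th:BS} and the $L^2$-characterization in~\eqref{eq:usub}: the two-sided bound $\|e^{t\Delta}u_0\|_2^2\simeq(1+t)^{-(r^*+n/2)}$ shows that $u_0\in\dot B^{-s}_{2,\infty}$ precisely when $s\le r^*+n/2$.

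Finally, I would take suprema. By the equivalence,
\[
\sigma(u_0)=\sup\{\sigma>0\colon P_{\sigma-n/2}(u_0)_+<\infty\}=\sup\{r+n/2\colon r>-n/2,\ P_r(u_0)_+<\infty\}.
\]
Since the set $\{r\in\R\colon P_r(u_0)_+<\infty\}$ is downward closed (if $P_{r'}(u_0)_+<\infty$ and $r<r'$ then $P_r(u_0)_+=0$, as already observed after~\eqref{ordering}), its supremum is exactly $r(u_0)_+\ge-n/2$, and intersecting with $\{r>-n/2\}$ does not change the supremum; this gives $\sigma(u_0)=r(u_0)_++n/2$ whenever $r(u_0)_+>-n/2$. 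The two boundary cases are quick: if $r(u_0)_+=+\infty$ then $P_r(u_0)_+<\infty$ for all $r$, hence $u_0\in\dot B^{-\sigma}_{2,\infty}$ for every $\sigma$ and $\sigma(u_0)=+\infty$; if $r(u_0)_+=-n/2$ then $P_r(u_0)_+=\infty$ for every $r>-n/2$, so no $\sigma>0$ qualifies and, since $u_0\in L^2=\dot B^0_{2,2}\subset\dot B^0_{2,\infty}$, we get $\sigma(u_0)=0$. The main obstacle is purely bookkeeping at these endpoints, together with the need to restrict the equivalence to $\sigma>0$ (the convergence of the geometric series fails at $\sigma=0$); everything else is a routine comparison of dyadic sums with ball integrals.
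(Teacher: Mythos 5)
Your proposal is correct, but it takes a genuinely different route from the paper. The paper's proof is a two-step composition through the heat semigroup: it invokes Proposition~\ref{prop:ulb} to convert the hypotheses $P_r(u_0)_->0$ and $P_r(u_0)_+<\infty$ into lower and upper bounds for $\|e^{t\Delta}u_0\|_2$, and then reads off $\sigma(u_0)$ from the classical heat-kernel characterization~\eqref{eq:usub} of $\dot B^{-2\sigma}_{2,\infty}$; the three cases ($r^*=-n/2$, $r^*=+\infty$, $r^*$ finite) are each dispatched in one sentence this way. You instead work entirely on the Fourier side, proving the equivalence $u_0\in\dot B^{-\sigma}_{2,\infty}\iff P_{\sigma-n/2}(u_0)_+<\infty$ by comparing dyadic blocks with ball integrals (using the support of $\varphi$ in one direction and the finite overlap plus the convergent geometric series $\sum_{2^j\lesssim\rho}2^{2j\sigma}$ in the other), and then take suprema over the downward-closed set $\{r\colon P_r(u_0)_+<\infty\}$. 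What your approach buys is twofold: it bypasses the semigroup and Fourier splitting entirely, and it yields the strictly stronger identity $\sigma(u_0)=r(u_0)_++n/2$ for \emph{every} $u_0\in L^2(\R^n)$, whether or not the decay character exists (your reduction $r^*(u_0)=r(u_0)_+$ in all three cases of Definition~\ref{def:dc} is correct, as is the endpoint bookkeeping at $\sigma=0$). What the paper's route buys is brevity and reuse of machinery already in place; note also that your dyadic-versus-ball comparison is essentially the computation the paper later performs inside the proof of Proposition~\ref{prop:3crit}, so your argument anticipates that lemma in the $\limsup$ direction.
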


\begin{proof}
In the case $r^*(u_0)=-n/2$, we have $+\infty=P_r(u_0)_->0$ for all $r>-n/2$, and so $\|e^{t\Delta}u_0\|_2^2\ge c_\epsilon(1+t)^{-\epsilon}$
for all $\epsilon>0$ by the first part of Proposition~\ref{prop:ulb}.
Applying the characterization~\eqref{eq:usub} of Besov spaces with negative regularity 
in terms of the heat kernel we get $\sigma(u_0)=0$.
In the case $r^*(u_0)=+\infty$, we have $P_r(u_0)_+<\infty$ for all $r\in\R$ and so 
$\|e^{t\Delta}u_0\|_2^2\le C_k(1+t)^{-k}$ for all $k\ge0$
by the second part of Proposition~\ref{prop:ulb}. We apply again~\eqref{eq:usub} to conclude that $\sigma(u_0)=+\infty$.
In the case $-n/2<r^*(u_0)<\infty$ the conclusion follows immediately combining both parts of Proposition~\ref{prop:ulb} with~\eqref{eq:usub}.
\end{proof}

\begin{remark}
C.~Niche and M.E.~Schonbek generalized the
 notion of decay character by introducing, for $s\ge0$ and $u_0\in H^s(\R^n)$, the notation
\[
r^*_s(u_0)=r^*(\Lambda^s u_0), \qquad\text{where $\Lambda=\sqrt{-\Delta}$}.
\]
The condition $u_0\in H^s(\R^n)$ insures that both $u_0$ and $\Lambda^su_0$ belong to $L^2(\R^n)$.
They proved  in~\cite{NicS15}*{Theorem~2.11}
 that, for $u_0\in H^s(\R^n)$, $r^*_s(u_0)=r^*(u_0)+s$ (this formula being valid also in the limit cases).
Proposition~\ref{prop:obv} provides a one-sentence proof for this formula to hold: indeed, $\Lambda^s$ is known to
be an isomorphism between $\dot B^{-\sigma}_{2,\infty}$ and $\dot B^{-(\sigma+s)}_{2,\infty}$ and therefore
$\sigma^*(\Lambda^s u_0)=\sigma^*(u_0)+s$.
\end{remark}

One can ask if it is possible to give a Besov space characterization of the \emph{existence}
of the decay character of an $L^2$-function. Proposition~\ref{prop:3crit} below will provide a positive answer.


\begin{proposition}
\label{prop:3crit}
Let $u_0\in L^2(\R^n)$. 
Then the decay character of $u_0$ does exist and $r^\ast=r^\ast(u_0)\in (-n/2,\infty)$
if and only if there is $\sigma^*>0$ such that 
$u_0\in \dot{\mathcal A}^{-\sigma^*}_{2,\infty}$.
In this case, $\sigma^*=\sigma(u_0)$ and formula~\eqref{eq:relation} holds.
\end{proposition}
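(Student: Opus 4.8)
The plan is to route both conditions through a single two-sided heat decay estimate, and then read off the equivalence from the two characterizations already established: Theorem~\ref{th:BS} (existence of a decay character $\leftrightarrow$ a two-side bound for $\|u(t)\|_2^2$) and the third part of Theorem~\ref{th:cara} (membership in $L^2(\R^n)\cap\dot{\mathcal A}^{-2\sigma}_{2,\infty}$ $\leftrightarrow$ a two-side bound for $\|e^{t\Delta}f\|_2$). Throughout I take $\mathcal L=\Delta$, so $\alpha=1$; this is legitimate because both $r^*(u_0)$ and $\sigma(u_0)$ are intrinsic to $u_0$ and independent of the operator. The bookkeeping to keep in mind is that $\dot{\mathcal A}^{-\sigma^*}_{2,\infty}$ corresponds to the choice $2\sigma=\sigma^*$ in Theorem~\ref{th:cara}, i.e. to $\|e^{t\Delta}f\|_2\simeq(1+t)^{-\sigma^*/2}$, hence to $\|e^{t\Delta}f\|_2^2\simeq(1+t)^{-\sigma^*}$; and that $r^*+n/2$ is exactly the exponent governing $\|u(t)\|_2^2$ in \eqref{eq:twoside} when $\alpha=1$.

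For the direct implication, assume $r^*=r^*(u_0)\in(-n/2,\infty)$ exists and set $\sigma^*=r^*+n/2>0$. Theorem~\ref{th:BS}\eqref{item:t1} applied to $u(t)=e^{t\Delta}u_0$ gives
\[
C_1(1+t)^{-\sigma^*}\le\|e^{t\Delta}u_0\|_2^2\le C_2(1+t)^{-\sigma^*},\qquad t>0.
\]
Since $u_0\in L^2(\R^n)$, the third part of Theorem~\ref{th:cara}, read with $\sigma=\sigma^*/2$ so that $-2\sigma=-\sigma^*$, turns this two-side bound into $u_0\in L^2(\R^n)\cap\dot{\mathcal A}^{-\sigma^*}_{2,\infty}$; in particular $u_0\in\dot{\mathcal A}^{-\sigma^*}_{2,\infty}$ with $\sigma^*>0$, as required.

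For the converse, suppose $u_0\in\dot{\mathcal A}^{-\sigma^*}_{2,\infty}$ for some $\sigma^*>0$. As $u_0\in L^2(\R^n)$ we have $u_0\in L^2(\R^n)\cap\dot{\mathcal A}^{-\sigma^*}_{2,\infty}$, so Theorem~\ref{th:cara}(3) (again with $\sigma=\sigma^*/2$) yields $\|e^{t\Delta}u_0\|_2^2\simeq(1+t)^{-\sigma^*}$. Writing $r^*=\sigma^*-n/2$, which lies in $(-n/2,\infty)$ precisely because $0<\sigma^*<\infty$, this is exactly estimate \eqref{eq:twoside} with $\alpha=1$. The converse part Theorem~\ref{th:BS}\eqref{item:t2} then guarantees that $u_0$ admits a decay character and that $r^*(u_0)=\sigma^*-n/2\in(-n/2,\infty)$.

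It remains to identify the constant. In both directions we obtained $r^*(u_0)+n/2=\sigma^*$, while Proposition~\ref{prop:obv} gives $r^*(u_0)+n/2=\sigma(u_0)$; hence $\sigma^*=\sigma(u_0)$ and formula~\eqref{eq:relation} holds. This also shows $\sigma^*$ is unique, since any admissible exponent forces $\|e^{t\Delta}u_0\|_2^2\simeq(1+t)^{-\sigma^*}$ and two such decay rates can agree only for equal exponents. The argument has no genuine obstacle beyond this index bookkeeping: all the analytic content is already packaged in Theorems~\ref{th:BS} and~\ref{th:cara}, and the only point to handle with care is matching the endpoint ranges, so that ``$r^*\in(-n/2,\infty)$'' and ``$\sigma^*\in(0,\infty)$'' correspond correctly under $\sigma^*=r^*+n/2$.
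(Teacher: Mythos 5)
Your forward direction (decay character exists $\Rightarrow u_0\in\dot{\mathcal A}^{-\sigma^*}_{2,\infty}$ with $\sigma^*=r^*+n/2$) is exactly the paper's argument: Theorem~\ref{th:BS}\eqref{item:t1} followed by the third part of Theorem~\ref{th:cara}, with the same index bookkeeping. The problem is the converse. You deduce the existence of the decay character from the two-side bound $\|e^{t\Delta}u_0\|_2^2\simeq(1+t)^{-\sigma^*}$ by invoking Theorem~\ref{th:BS}\eqref{item:t2}. But that converse assertion of Theorem~\ref{th:BS} is precisely the statement whose proof the paper postpones to Remark~\ref{re:pro}, and there it is proved \emph{by means of} Proposition~\ref{prop:3crit}: the two-side bound gives $u_0\in\dot{\mathcal A}^{-2r^*-n}_{2,\infty}$ via Theorem~\ref{th:cara2}, and then Proposition~\ref{prop:3crit} is what converts Besov-set membership into existence of the decay character. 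So your argument is circular: at the point where Proposition~\ref{prop:3crit} is being established, the only available content of Theorem~\ref{th:BS} is assertion~\eqref{item:t1} (which follows from Proposition~\ref{prop:ulb}), not assertion~\eqref{item:t2}.

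What is genuinely missing is a direct passage from $u_0\in\dot{\mathcal A}^{-\sigma^*}_{2,\infty}$ to the two conditions $P_{r^*}(u_0)_+<\infty$ and $P_{r^*}(u_0)_->0$ with $r^*+n/2=\sigma^*$, after which Remark~\ref{rem:exist} yields the existence of the decay character. This is the analytic core of the paper's proof and it does not reduce to quoting earlier theorems. For the upper bound one writes, for $2^{j-1}<\rho\le 2^j$,
\begin{equation*}
\rho^{-2\sigma^*}\int_{|\xi|\le\rho}|\widehat u_0(\xi)|^2\dd\xi
\lesssim 2^{-2j\sigma^*}\sum_{k\le j}\|\Delta_k u_0\|_2^2
\lesssim 2^{-2j\sigma^*}\Bigl(\sum_{k\le j}2^{2k\sigma^*}\Bigr)\|u_0\|_{\dot B^{-\sigma^*}_{2,\infty}}^2
\lesssim \|u_0\|_{\dot B^{-\sigma^*}_{2,\infty}}^2,
\end{equation*}
which gives $P_{r^*}(u_0)_+<\infty$. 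For the lower bound one uses the sequence $(j_k)$ from the definition of $\dot{\mathcal A}^{-\sigma^*}_{2,\infty}$: given small $\rho$, the largest $j_k$ with $2^{j_k}\le\rho$ satisfies $2^{j_k+M}\ge\rho$ (with $M=\|(j_k-j_{k+1})\|_{\ell^\infty}$), so $\rho^{-2\sigma^*}\int_{|\xi|\le\rho}|\widehat u_0|^2\dd\xi\gtrsim 2^{-2(j_k+M)\sigma^*}\|\Delta_{j_k}u_0\|_2^2\ge c>0$, whence $P_{r^*}(u_0)_->0$. Without this (or some other argument independent of Theorem~\ref{th:BS}\eqref{item:t2}), your converse direction does not stand.
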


\begin{proof}
Let us start assuming
$u_0\in  \dot{\mathcal{A}}^{-\sigma^\ast}_{2,\infty}$, with $\sigma^*\in(0,\infty)$.
For any $\rho>0$, take $j\in \Z$ such that $2^{j-1}<\rho\le2^j$.
Then,
\[
\begin{split}
 \rho^{-2\sigma^*}\int_{|\xi|\le \rho} |\widehat u_0(\xi)|^2\dd\xi
 &\lesssim 2^{-2j\sigma^*} \int_{|\xi|\le 2^j} |\widehat u_0(\xi)|^2\dd\xi\\
 & \lesssim 2^{-2j\sigma^*} \sum_{-\infty<k\le j} \|\Delta_k u_0\|_2^2\\
 & \lesssim 2^{-2j\sigma^*} \Bigl( \sum_{-\infty<k\le j} 2^{2k\sigma^*}\Bigr)\|u_0\|^2_{\dot B^{-\sigma^*}_{2,\infty}}\\
 & \lesssim \|u_0\|^2_{\dot B^{-\sigma^*}_{2,\infty}}.
\end{split}
\]
Letting $r^*+n/2=\sigma^*$, such estimates imply $P_{r^*}(u_0)_+<+\infty$.

Moreover, from our assumption we can take $(j_k)$ as in the definition of 
$\dot{\mathcal{A}}^{-\sigma^\ast}_{2,\infty}$.
Let $\rho_0=2^{j_0}$ and $M=\|(j_k-j_{k+1})\|_{\ell^\infty}$.
For any  $0<\rho< \rho_0$, let $j_k$ be the largest integer of the sequence $(j_k)$ such that $2^{j_k}\le \rho$.
Then we must have $2^{j_k+M}\ge \rho$, otherwise we would find another integer $j_h\in [j_k,j_k+M]$ such that 
$2^{j_h}\le \rho$, thus contredicting the maximality of $j_k$.
Then we have
\begin{equation}
\label{eq:used}
\begin{split}
 \rho^{-2\sigma^*}\int_{|\xi|\le \rho} |\widehat u_0(\xi)|^2\dd\xi
 &\gtrsim 2^{-2(j_k+M)\sigma^*}  \|\Delta_{j_k} u_0\|_2^2 \ge c>0,
\end{split}
\end{equation}
with $c$ independent on~$\rho\in(0,\rho_0)$.
This implies, $P_{r^*}(u_0)_->0$.
By Remark~\ref{rem:exist} we conclude that the decay character does exist and  $r^*(u_0)=r^*\in (-n/2,+\infty)$.

Conversely, assume that the decay character of~$u_0$ does exist and 
$r^*(u_0)\in (-n/2,+\infty)$.
Then we know by the first assertion of Theorem~\ref{th:BS},
 that the solution of the heat equation $e^{t\Delta}u_0$
is bounded from below and above by $C(1+t)^{-\frac12(r^*+n/2)}$. 
We conclude by  the last claim of Theorem~\ref{th:cara} that 
$u_0\in \dot{\mathcal{A}}^{-\sigma^*}_{2,\infty}$ and that $\sigma^*=r^*+n/2$.
\end{proof}

\begin{remark}[Proof of Theorem~\ref{th:BS}]
\label{re:pro}
This first assertion of Theorem~\ref{th:BS} follows immediately from the estimates obtained in Proposition~\ref{prop:ulb}. 
We can now prove the last assertion of Theorem~\ref{th:BS}.
If $u_0\in L^2(\R^n)$ satisfies the estimates~\eqref{eq:twoside},
 then by Theorem~\ref{th:cara2}-\eqref{ite:3}, we obtain $u_0\in \dot{\mathcal{A}}^{-2r^*-n}_{2,\infty}$
and the decay character of~$u_0$ does exist and equals~$r^*$ by Proposition~\ref{prop:3crit}.
Theorem~\ref{th:BS} is now completely proved.
\hfill{$\Box$}
\end{remark}

\section{Application to nonlinear dissipative systems: The Navier--Stokes equation}

As an application of our analysis to nonlinear dissipative systems, let us consider the Navier--Stokes equations in~$\R^3$:
\begin{equation}
 \tag{NS}
 \begin{cases}
  u_t+u\cdot\nabla u+\nabla p=\Delta u, &x\in\R^3,\;t>0\\
  \nabla\cdot u=0 \\
  u(x,0)=u_0(x),
 \end{cases}
\end{equation}
where $u=u(x,t)=(u_1,u_2,u_3)(x,t)$ is the velocity field of an incompressible viscous fluid flow, $p=p(x,t)$ is the pressure, 
and $u_0=(u_{0,1},u_{0,2},u_{0,3})$ is the initial datum.
For any $u_0\in L^2(\R^3)$, with $\nabla\cdot u_0=0$, we know since J.~Leray's classical paper~\cite{Ler34}
that there is at least one solution 
$u\in C_w([0,\infty),L^2(\R^n))\cap L^2_{\text{loc}}(\R^+,H^{1}(\R^3))$
solving~(NS) in the distributional sense and satisfying the strong energy inequality
\begin{equation}
 \label{sei}
 \|u(t)\|_2^2+2\int_s^t\|\nabla u(r)\|_2^2\dd r\le \|u(s)\|_2^2,
\end{equation}
for $s=0$, almost $s>0$ and all $t\ge s$.
The uniqueness and the regularity of such solutions are well known open problems.

The following theorem completely characterize the solutions satisfying sharp two-side decay estimates.
It completes well known Wiegner's Theorem by giving several necessary and sufficient conditions for the validity of bounds from below
for the decay of the energy.
It  improves~\cite{BjoS09}*{Theorem~6.5},
and completes an earlier study by Skal\'ak~\cite{Ska14} where only the upper bounds were discussed.

\begin{theorem}
 \label{th:4equiv}
 Let $u_0\in L^2(\R^3)$ and $\sigma>0$.
 The three following properties are equivalent:
 \begin{itemize}
  \item[(i)]\label{nsi}
  $\displaystyle\liminf_{\rho\to0}\rho^{-2\sigma}\int_{|\xi|\le \rho} |\widehat u_0(\xi)|^2\dd\xi>0 \quad\text{and}\quad
  \displaystyle\limsup_{\rho\to0}\rho^{-2\sigma}\int_{|\xi|\le \rho} |\widehat u_0(\xi)|^2\dd\xi<\infty,
  $
  \item[(ii)]\label{nsii}
$u_0\in\dot{\mathcal{A}}^{-2\sigma}_{2,\infty}$,
  \item[(iii)]\label{nsiii}
  $ (1+t)^{-\sigma}\lesssim \|e^{t\Delta}u_0\|_2\lesssim (1+t)^{-\sigma}$.
 \end{itemize}
If, moreover, $u_0$ is a divergence-free vector-field,  if $u$ is a weak solution of~(NS) as above and $0<\sigma<5/4$, then
the three previous properties are equivalent to
\begin{itemize}
\item[(iv)]\label{nsiv} 
  $(1+t)^{-\sigma}\lesssim\|u(t)\|_2\lesssim (1+t)^{-\sigma}$.
\end{itemize} 
\end{theorem}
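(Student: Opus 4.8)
The plan is to prove the equivalences in two blocks: first the purely linear chain (i)$\Leftrightarrow$(ii)$\Leftrightarrow$(iii), which holds for every $u_0\in L^2(\R^3)$ and is a repackaging of the results of the previous sections; then the nonlinear equivalence (iii)$\Leftrightarrow$(iv), which is the genuinely new part and where the restriction $\sigma<5/4$ enters.

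For the first block, property~(i) says exactly that, for the order $r$ linked to $\sigma$ through the normalization of the decay indicators, one has $0<P_{r}(u_0)_-\le P_{r}(u_0)_+<\infty$. By Remark~\ref{rem:exist} this is equivalent to the existence of the decay character $r^\ast(u_0)\in(-n/2,\infty)$, and by Proposition~\ref{prop:3crit} it is in turn equivalent to $u_0\in\dot{\mathcal A}^{-2\sigma}_{2,\infty}$, which is~(ii). Finally (ii)$\Leftrightarrow$(iii) is precisely assertion~(3) of Theorem~\ref{th:cara} (equivalently Theorem~\ref{th:cara2} with $\alpha=1$), using that $u_0\in L^2(\R^3)$ is assumed throughout. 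This settles the equivalence of the first three conditions without any reference to the equation.

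The heart of the proof is the comparison of the Navier--Stokes flow $u$ with the linear flow $e^{t\Delta}u_0$. Set $w(t)=u(t)-e^{t\Delta}u_0$, which by Duhamel's formula solves $w(t)=-\int_0^t e^{(t-s)\Delta}\mathbb{P}\,\div(u\otimes u)(s)\dd s$. The equivalence (iii)$\Leftrightarrow$(iv) will follow once we show that, in the regime $0<\sigma<5/4$, the corrector satisfies $\|w(t)\|_2\lesssim (1+t)^{-\rho}$ for some $\rho>\sigma$; indeed, if $w$ decays strictly faster than the common rate $(1+t)^{-\sigma}$, then $\bigl|\,\|u(t)\|_2-\|e^{t\Delta}u_0\|_2\,\bigr|\le\|w(t)\|_2$ transfers both the upper and the lower bound in either direction. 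For the direction (iii)$\Rightarrow$(iv) the required decay of $w$ is exactly Wiegner's theorem \cite{Wie87}: from the linear upper bound $\|e^{t\Delta}u_0\|_2\lesssim (1+t)^{-\sigma}$ one gets $\|w(t)\|_2\lesssim (1+t)^{-\beta}$ with $\beta=\min\{\sigma+\tfrac12,\tfrac54\}$, and $\beta>\sigma$ precisely because $\sigma<5/4$. The upper bound in~(iv) then follows at once; for the lower bound one writes $\|u(t)\|_2\ge \|e^{t\Delta}u_0\|_2-\|w(t)\|_2\ge \tfrac{c}{2}(1+t)^{-\sigma}$ for $t$ large, and controls the bounded range $t\in[0,t_0]$ by the monotonicity of the energy (so that $\|u(t)\|_2\ge \|u(t_0)\|_2>0$), the case $u_0=0$ being trivial.

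The main difficulty is the converse direction (iv)$\Rightarrow$(iii), which is not covered by Wiegner's theorem since there the decay of the linear part is the unknown; this is the part treated only for upper bounds in \cite{Ska14}. Here I would estimate $\|w(t)\|_2$ directly from the Duhamel formula using \emph{only} the a priori information coming from~(iv), namely $\|u(s)\|_2\lesssim (1+s)^{-\sigma}$ together with the energy dissipation $\int_0^\infty\|\nabla u\|_2^2<\infty$ furnished by~\eqref{sei}. The model estimate $\|e^{\tau\Delta}\mathbb{P}\,\div(u\otimes u)\|_2\lesssim \tau^{-5/4}\|u(s)\|_2^2$ is not integrable at $s=t$, so one splits $\int_0^t=\int_0^{t/2}+\int_{t/2}^t$ and uses on the near-diagonal part the extra regularity of $u$ (via $L^2_t\dot H^1$, i.e. $u\in L^2_tL^6$ in dimension three) to absorb the singularity; the outcome is $\|w(t)\|_2\lesssim (1+t)^{-\rho}$ with $\rho>\sigma$ as long as $\sigma<5/4$. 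Once this is in hand, the triangle inequality transfers both bounds from $u$ to $e^{t\Delta}u_0$ exactly as before, giving~(iii). I expect this bilinear Duhamel estimate---forcing the corrector to decay \emph{strictly} faster than $(1+t)^{-\sigma}$ while invoking nothing about $e^{t\Delta}u_0$---to be the technical crux, and it is also the step that pinpoints why the borderline $\sigma=5/4$ must be excluded: there $\rho=\sigma$ and the corrector is no longer negligible, so the lower bound of the nonlinear problem ceases to be slaved to that of the heat flow.
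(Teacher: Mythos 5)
Your first block --- (i)$\iff$(ii) via Remark~\ref{rem:exist} and Proposition~\ref{prop:3crit}, and (ii)$\iff$(iii) via Theorem~\ref{th:cara} --- is exactly the paper's argument, and your proof of (iii)$\Rightarrow$(iv) is also the paper's: apply Wiegner's theorem to the corrector $w=u-e^{t\Delta}u_0$ and observe that $t^{\sigma}\|w(t)\|_2\to0$ when $\sigma<5/4$. (Minor slip: Wiegner's rate is $(1+t)^{-\sigma-1/4}$ for $\sigma<1$, not $\sigma+1/2$; this does not affect the conclusion since one still has strict gain over $\sigma$.)

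The divergence, and the gap, is in (iv)$\Rightarrow$(iii). The paper does not reprove anything here: it invokes Skal\'ak's \emph{inverse Wiegner theorem} \cite{Ska14}, which states that $\|u(t)\|_2\lesssim(1+t)^{-\sigma}$ for a weak solution with $0\le\sigma\le5/4$ already forces $\|e^{t\Delta}u_0\|_2\lesssim(1+t)^{-\sigma}$; once the \emph{linear} upper bound is recovered, Wiegner's theorem applies and the decomposition $e^{t\Delta}u_0=u-w$ transfers the two-sided bound. You instead propose to establish the needed decay of $w$ directly from the Duhamel formula, using only $\|u(s)\|_2\lesssim(1+s)^{-\sigma}$ and $u\in L^2_t\dot H^1$. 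As sketched, this does not close. The far-field piece $\int_0^{t/2}$ is fine with the $L^1\to L^2$ bound $\|e^{\tau\Delta}\mathbb{P}\nabla\cdot(u\otimes u)\|_2\lesssim\tau^{-5/4}\|u(s)\|_2^2$. But on the near-diagonal piece the best you can extract from $u\in L^2_tL^6$ is
\[
\int_{t/2}^t (t-s)^{-3/4}\,\|u(s)\|_2\,\|u(s)\|_6\,\dd s,
\]
coming from $\|u\otimes u\|_{3/2}\le\|u\|_2\|u\|_6$ and $\|e^{\tau\Delta}\mathbb{P}\nabla\cdot f\|_2\lesssim\tau^{-3/4}\|f\|_{3/2}$; Cauchy--Schwarz in $s$ then requires $(t-s)^{-3/2}$ to be integrable near $s=t$, which it is not. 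Other interpolations ($\|u\otimes u\|_2\le\|u\|_2^{1/2}\|u\|_6^{3/2}$ against $\tau^{-1/2}$, or $\|u\cdot\nabla u\|_1\le\|u\|_2\|\nabla u\|_2$ against $\tau^{-3/4}$) fail the same exponent count: the regularity $L^2_t\dot H^1_x$ available for a Leray--Hopf solution is just not enough to absorb the singularity of the $L^q\to L^2$ smoothing estimate in a pointwise-in-time $L^2$ bound. This is precisely why both Wiegner and Skal\'ak argue via Fourier splitting (a differential inequality for $\|w(t)\|_2^2$ combined with the pointwise low-frequency bound $|\widehat w(\xi,t)|\lesssim|\xi|\int_0^t\|u(s)\|_2^2\dd s$) rather than via the integral equation in $L^2$. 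There is also a secondary point you pass over: for a general Leray--Hopf weak solution the Duhamel representation itself must be justified. So either quote \cite{Ska14} as the paper does, or replace your near-diagonal estimate by a Fourier-splitting argument; as written, the step ``the outcome is $\|w(t)\|_2\lesssim(1+t)^{-\rho}$ with $\rho>\sigma$'' is asserted rather than proved, and it is the whole content of the implication.
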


\begin{proof}
The equivalence \mbox{(i)$\iff$(ii)} is just a reformulation of Proposition~\ref{prop:3crit}.
Indeed, as explained in~Remark~\ref{rem:exist},
the decay character  $r^*(u_0)$ does exist and belongs to $(-n/2,\infty)$
if and only if there exists $r^*\in(-n/2,\infty)$ such that $0<P_{r^*}(u_0)_-\le P_{r^*}(u_0)_+<\infty$.

The equivalence \mbox{(ii)$\iff$(iii)} was established in Theorem~\ref{th:cara}.

The implication \mbox{(iii)$\,\Rightarrow\,$(iv)} relies on classical Wiegner's theorem~\cite{Wie87}: in the 3D case and in the absence of external forces,
the Theorem in~\cite{Wie87} can restated as follows:  if  $u_0\in L^2(\R^n)$ is a divergence-free
vector field such that $\|e^{t\Delta}u_0\|_2 \lesssim (1+t)^{-\sigma}$, and if $u$ is a weak solution of (NS)
as above, then the difference $w(t)=u(t)-e^{t\Delta}u_0$ satisfies the decay estimates
\begin{equation*}
\|w(t)\|_2\lesssim
\begin{cases}
 (1+t)^{-1/4 -\sigma}, &\text{if $0\le \sigma<1$},\\
 \ln(e+t)(1+t)^{-5/4}, &\text{if $\sigma=1$},\\
 (1+t)^{-5/4}, &\text{if $\sigma>1$}.
\end{cases}
\end{equation*}
Notice in particular that, for $0<\sigma<5/4$, one gets $t^{\sigma}\|w(t)\|_2\to0$ as $t\to\infty$.
Therefore, under the same restrictions on~$\sigma$ and the conditions of item~\mbox{(iii)},
it easily follows that $u=e^{t\Delta}u_0+w$
satisfies the estimates as in~\eqref{nsiv}.

The proof of the implication \mbox{(iv)$\,\Rightarrow\,$(iii)} relies on the so-called \emph{inverse Wiegner's theorem}
recently established by Z.~Skal\'ak~\cite{Ska14}: his results asserts (among other things) that 
if $u$ is a weak solution of the Navier--Stokes equation as above, 
satisfying $\|u(t)\|_2\lesssim (1+t)^{-\sigma}$ (with $0\le \sigma\le 5/4$) then $\|e^{t\Delta}u_0\|_2\lesssim(1+t)^{-\sigma}$.
Hence, Wiegner's theorem applies and decomposing $e^{t\Delta}u_0=u-w$ property~\mbox{(iii)} 
follows in the range $0<\sigma<5/4$.
\end{proof}

In the borderline case $\sigma=5/4$, the implication (iii)$\Rightarrow$(iv) still holds.
But the converse implication is no longer true.
Indeed, it is possible to construct examples of Navier--Stokes flows such that 
$\lim_{t\to+\infty} t^{5/4}\|e^{t\Delta}u_0\|_2=0$ and
$(1+t)^{-5/4}\lesssim\|u(t)\|_2\lesssim (1+t)^{-5/4}$.
This can be achieved as follows: one starts with a divergence-free initial datum $u_0\in L^2(\R^3)$ such that $\widehat u_0$ vanishes in a neighborhood of the origin. This insures a fast (exponential) 
$L^2$-decay of the solution of the heat equation $e^{t\Delta} u_0$. Generically (\emph{i.e.\/} in the absence of special symmetries), the matrix $\int_0^\infty\!\!\int (u\otimes u)(y,s)\,{\rm d}y\,{\rm d}t$
will not be a scalar multiple of the identity matrix. By a theorem of T.~Miyakawa and M.E.~Schonbek~\cite{MiyS01}, one obtains a solution~$u$ of the Navier--Stokes equations
 that satisfies the lower bound estimate
$\|u(t)\|_2\gtrsim(1+t)^{-5/4}$, and, by Wiegner's theorem, 
$\|u(t)\|_2\lesssim(1+t)^{-5/4}$.

\section{Examples}

\begin{example}[Construction of $v_0\in L^2(\R^n)$ such that the limit $P_r(v_0)$ is not well defined]
\label{ex:v0}
Let $r\in(-n/2,\infty)$.
Decompose the unit ball $B_1=\{\xi\colon|\xi|\le 1\}$ in concentric dyadic
annuli: $B_1=\Gamma_0\cup\Gamma_{-1}\cup\Gamma_{-2}\cup\ldots$, with $\Gamma_j=\{\xi\colon 2^{j-1}\le |\xi|\le2^j\}$. 
Then set $\widehat v_0(\xi)=|\xi|^{r}$ on $\Gamma_0\cup\Gamma_{-2}\cup\ldots$ and $\widehat v_0(\xi)=0$ elsewhere.
A direct computation shows that  $0<P_r(v_0)_-<P_r(v_0)_+<\infty$.
Indeed, passing to spherical coordinates in the computation of $P_r(v_0)_+$ and $P_r(v_0)_-$,
\[ 
P_r(v_0)_+
=\omega_n\lim_{j\to-\infty}2^{-2j(2r+n)}
\int_0^\infty  \lambda^{2r+n-1}{\bf 1}_{\Gamma_{2j}\cup\Gamma_{2(j-1)}\cup\ldots} \dd\lambda
\]
and
\[ 
P_r(v_0)_-
=\omega_n\lim_{j\to-\infty}2^{-(2j+1)(2r+n)}
\int_0^\infty\lambda^{2r+n-1}{\bf 1}_{\Gamma_{2j}\cup\Gamma_{2(j-1)}\cup\ldots} \dd\lambda,
\]
where $\omega_n$ is the surface of the unit sphere of~$\R^n$ and ${\bf 1}_A$ denotes the indicator function of the set~$A$.
Both limits could be easily computed, but in fact it is simpler to observe that $\frac{P_r(v_0)_-}{P_r(v_0)_+}=2^{-(2r+n)}<1$
to conclude that $P_r(v_0)$ is not well defined by formula~\eqref{eq:dcs}.
In fact,
$0<P_r(u_0)_-<P_r(u_0)_+<\infty$ and by Remark~\ref{rem:exist}~$v_0$ admits a decay character 
in the sense of Definition~\ref{def:dc} and $r^*(v_0)=r$.
\end{example}

\begin{example}
\label{ex:u0}
It can happen that $r(u_0)_+=r(u_0)_-$, yet the decay character
$r^*(u_0)$ does not exist. 
The example is elementary.
Let $u_0\in L^2(\R^n)$ be such that   $\widehat u_0(\xi)=|\xi|^{r_0}\log|\xi|$ 
in a neighborhood of $\xi=0$, for some $r_0>-n/2$.
Then by a simple computation 
$r(u_0)_+=r(u_0)_-=r_0$.
But the decay character $r^*(u_0)$ is not well defined, 
because the $\sup$ and the $\inf$ in the definition of $r(u_0)_+$ and $r(u_0)_-$
 (see Eq:~\eqref{r+}-\eqref{r-}) are not achieved.
For this specific initial datum, the corresponding solution
of the linear problem~\eqref{lin:pro} satisfies estimates similar to those in~\eqref{eq:twoside},
but with a corrective time-dependent logarithmic factor.
Notice that for this example there is no $r$ such that $0<P_r(u_0)<\infty$ (the existence of
such an~$r$ was needed in the original definition of decay character~\cite{BjoS09}).
\end{example}

\begin{example}
\label{ex:w0}
Let us construct an example of $w_0\in L^2(\R^n)$ verifying 
$r(w_0)_+<r(w_0)_-$. Fix $r\in(-n/2,+\infty)$.
Let $(a_k)$ and $(b_k)$ two real decreasing sequences  such that $0<b_{k+1}\le a_k<b_k$ and $b_k\to0$ for all~$k\in\N$,
and consider two more positive sequences $(\eta_k)$, $(h_k)$ to be chosen later.
We define $w_0$ through its Fourier transform: 
\[
\widehat w_0(\xi)=\sum_{k=0}^\infty h_k{\bf 1}_{a_k\le|\xi|\le b_k}.
\]
The condition insuring $w_0\in L^2(\R^n)$ is $\sum_{k=0}^\infty h_k^2(b_k^n-a_k^n)<\infty$.
It is then convenient to set $\eta_k=h_k^2(b_k^n-a_k^n)$, and the first condition is that $(\eta_k)$ is summable.
Let $\Phi_r(\rho)=\rho^{-2r-n}\int_{|\xi|\le \rho} |\widehat w_0(\xi)|^2\dd\xi$.
Then,
\[
P_r(w_0)_+=
\limsup_{\rho\to0}\Phi_r(\rho)=\lim_{k\to\infty}\Phi_r(b_k)=\lim_{k\to\infty}\omega_n b_k^{-(2r+n)}\sum_{j=k}^\infty \eta_j,
\]
and
\[
P_r(w_0)_-=
\liminf_{\rho\to0}\Phi_{r}(\rho)=\lim_{k\to\infty}\Phi_r(a_k)=\lim_{k\to\infty}\omega_n a_k^{-(2r+n)}\sum_{j={k+1}}^\infty\eta_j,
\]
where $\omega_n$ is the measure of the unit sphere.
Now, we choose, 
\[b_k=2^{-2^k},\qquad a_k^n=b_k^n-b_{k}^{2n}, \qquad \eta_k=b_k^{2r+n}-b_{k+1}^{2r+n}.\]
Then, $\sum_{j=k}^\infty \eta_j=b_k^{2r+n}$.
These choices insure that $w_0\in L^2(\R^n)$ as $(\eta_k)$ is indeed summable and determine $(h_k)$.
Moreover, we get $0<P_r(w_0)_+=\omega_n<\infty$
and we conclude that 
\[r(w_0)_+=r.\]
Let us now study $r(w_0)_-$.
First observe that $\sum_{j=k+1}^\infty \eta_j=b_{k+1}^{2r+n}=(b_k^{2r+n})^2$.
Then, 
\[
 a_k^{-(2r+n)}\sum_{j={k+1}}^\infty\eta_j=
[b_k(1-b_k^n)^{-n}]^{-2r-n} (b_k^{2r+n})^2\to 0\qquad\text{as $k\to\infty$},
\]
so that 
\[
P_r(w_0)_-=\liminf_{k\to\infty} \Phi_r(a_k)=0.
\]
For this reason, $r(w_0)_-\not=r$ and $r^*(w_0)$ does not exist.
In fact $r(w_0)_-$ is given by $r'\in(-n/2,\infty)$, such that the limit
\[
\liminf_{\rho\to0}\Phi_{r'}(\rho)=\lim_{k\to\infty}\Phi_{r'}(a_k)=\omega_n a_k^{-(2r'+n)}\sum_{j={k+1}}^\infty\eta_j,
\]
is a strictly positive real:
we need $2r'+n=2(2r+n)$.
In conclusion,
\[
r(w_0)_-=2r+n/2, \qquad r(w_0)_+=r.
\]
\end{example}


%
%
%
%

\section{Conclusions}

Let $\sigma>0$ and $f\in L^2(\R^n)$.
In~\cites{BjoS09} C.~Bjorland and M.E.~Schonbek, proved that:
\begin{subequations}
\begin{equation}
\label{implia}
0<\lim_{\rho\to0^+}\rho^{-2\sigma}\int_{|\xi|\le \rho} |\widehat f(\xi)|^2\dd\xi<\infty
 \quad\Longrightarrow\quad
(1+t)^{-\sigma}\lesssim \|e^{t\Delta}f\|_2\lesssim (1+t)^{-\sigma},
\end{equation}
where $e^{t\Delta}$ denotes the heat kernel in~$\R^n$ and the symbol $\lesssim$ means that the inequality $\le $ holds up to a multiplicative constant independent on time.
In~\cite{NicS15} C.~Niche and M.E.~Schonbek extended this to a large class of pseudo-differential operator~$\mathcal{L}$,
with homogeneous symbol of degree~$2\alpha$, proving that
\begin{equation}
\label{implib}
 0<\lim_{\rho\to0^+}\rho^{-2\sigma}\int_{|\xi|\le \rho} |\widehat f(\xi)|^2\dd\xi<\infty
 \quad\Longrightarrow\quad
(1+t)^{-\sigma/\alpha}\lesssim \|e^{t\mathcal{L}}f\|_2\lesssim (1+t)^{-\sigma/\alpha}.
\end{equation}
\end{subequations}
In the present paper, we showed that the reverse implications in ~\eqref{implia}-\eqref{implib} do not hold, as the limit on the LHS might not exist.
Next, suitably relaxing the condition on the LHS, we got the following characterization of the class of $L^2$-functions 
satisfying sharp two-side decay estimates. Namely, we established that:
\begin{equation}
\label{cara3}
\begin{cases}
  \displaystyle\liminf_{\rho\to0^+}\rho^{-2\sigma}\int_{|\xi|\le \rho} |\widehat f(\xi)|^2\dd\xi>0 \\
  \displaystyle\limsup_{\rho\to0^+}\rho^{-2\sigma}\int_{|\xi|\le \rho} |\widehat f(\xi)|^2\dd\xi<\infty
\end{cases}
\iff
f\in\dot{\mathcal{A}}^{-2\sigma}_{2,\infty},
\iff
 (1+t)^{-\sigma/\alpha}\lesssim \|e^{t\mathcal{L}}f\|_2\lesssim (1+t)^{-\sigma/\alpha},
\end{equation}
where $\dot{\mathcal{A}}^{-2\sigma}_{2,\infty}$ is the subset of the 
Besov space $\dot B^{-2\sigma}_{2,\infty}(\R^n)$ introduced in Section~\ref{sec:subset}.

Moreover, if $f$ is a divergence-free vector field, then in the special case $n=3$, $\mathcal{L}=\Delta$ and $0<\sigma<5/4$, 
we proved in Theorem~\ref{th:4equiv} that the three previous conditions are in turn equivalent to the two-side energy estimate 
of weak solutions of the Navier--Stokes equation starting from~$f$:
\begin{equation}
 \label{ns:ts}
 (1+t)^{-\sigma}\lesssim \|u(t)\|_2\lesssim (1+t)^{-\sigma}.
\end{equation}

When an $L^2$ function satisfy any of the above equivalent conditions~\eqref{cara3}, the exponent~$\sigma\in(0,\infty)$ is uniquely determined. 
We coin this exponent~$\sigma=\sigma(f)$ the \emph{Besov character} of~$f$. 
It is related to (a suitable improvement of) Bjorland--Schonbek's~\cite{BjoS09} notion of ``decay character'', denoted~$r^*(f)$, by the relation~$\sigma(f)=r^*(f)+n/2$.

\end{document}